\documentclass[11pt]{article}

\usepackage{amsfonts}
\usepackage{amsmath,amsthm,amscd,amssymb,mathrsfs,setspace}
\usepackage{latexsym, epsf, epsfig}
\usepackage{color}
\usepackage[hmargin=.95in,vmargin=.95in]{geometry}
\usepackage{graphicx}
\usepackage{hyperref}
\usepackage{cite}

\setcounter{MaxMatrixCols}{10}

\hypersetup{backref=true}
\newcommand{\ds}{\displaystyle}

\newcommand{\cA}{{\mathcal{A}}}

\newcommand{\cE}{{\mathcal{E}}}

\newcommand{\cB}{\mathcal{B}}

\usepackage{xspace,xypic}

\renewcommand{\vec}[1]{\mathbf{#1}}
\newcommand{\set}[1]{\left\{ {#1} \right\}}
\newcommand{\norm}[1]{\left\| {#1} \right\|}
\newcommand{\braket}[1]{\left\langle {#1} \right\rangle}
\newcommand{\BA}{\mathbb A}
\newcommand{\BB}{\mathbb B}


\theoremstyle{plain}
\newtheorem{theorem}{Theorem}[section]

\newtheorem{proposition}[theorem]{Proposition}
\newtheorem{corollary}[theorem]{Corollary}
\newtheorem{assumption}{Assumption}
\newtheorem{definition}{Definition}
\theoremstyle{remark}
\newtheorem{remark}{Remark}[section]
\numberwithin{equation}{section} \numberwithin{theorem}{section}
\numberwithin{remark}{section} \linespread{1}

\begin{document}

\title{Weak Solutions for a Poro-elastic Plate System}
 \author{\normalsize \begin{tabular}[t]{c@{\extracolsep{.8em}}c}
            { \large Elena Gurvich} &{ \large Justin T. Webster} \\
 \it University of Maryland, Baltimore County   \hskip.6cm  & \hskip.6cm \it University of Maryland, Baltimore County   \\
 \it Baltimore, MD &\it Baltimore, MD\\
    \it  gurv-3@umbc.edu &  \it websterj@umbc.edu \\
\end{tabular}}
\maketitle

\begin{abstract} {\noindent We consider a recent plate model obtained as a scaled limit of the three dimensional Biot system of poro-elasticity. The result is a ``2.5" dimensional linear system that couples traditional Euler-Bernoulli plate dynamics to a pressure equation in three dimensions, where  diffusion acts only transversely. We allow the permeability function to be time-dependent, making the problem non-autonomous and disqualifying much of the standard abstract theory. Weak solutions are defined in the so called quasi-static case, and the problem is framed abstractly as an implicit, degenerate evolution problem. Utilizing the theory for weak solutions for implicit evolution equations, we obtain existence of solutions. Uniqueness is obtained under additional hypotheses on the regularity of the permeability function. We address the inertial case in an appendix, by way of semigroup theory. The work here provides a baseline theory of weak solutions for the poro-elastic plate, and exposits a variety of interesting related models and associated analytical investigations.
  \\[.15cm]
\noindent {\bf Key terms}: poro-elasticity; plate; elliptic-parabolic; hyperbolic-parabolic; implicit evolution
 \\[.15cm]
\noindent {\bf MSC 2010}: 74F10, 74K20, 76S05, 35D30, 34K32}
\end{abstract}
\maketitle

\section{Introduction}
In this paper we address the weak solutions of a quasi-static, compressible Biot plate system. Our primary modeling reference is \cite{mikelic}, which itself names \cite{taber} as a motivating reference. The plate model presented in \cite{mikelic} is derived as a rigorous limit of the traditional 3D Biot equations \cite{show1,frenchpaper} (and references therein). In obtaining the 2D equations of motion through appropriate scalings, existence of weak solutions is naturally obtained  from those of the 3D Biot system. Such a poro-elastic plate model (or a suitable modification thereof) has been utilized recently, e.g., in \cite{bcmw} and \cite{rohan}, to capture the dynamics of a saturated porous plate interacting with other dynamics in a multi-layered structure. 

In the present analysis, we apply to the Biot plate the abstract framework which was developed for implicit, degenerate evolution equations in \cite{indiana} and discussed later in \cite{dbshow,showmono,showold}. (Of course this abstract theory may be applied to 3D Biot dynamics as well.) We permit the permeability function in the analysis to be time-dependent, resulting in a time-dependent principal operator for the pressure equation. A number of interesting features emerge in the study of this ``2.5" dimensional system, making a direct application of the available abstract theory nontrivial. Indeed, the regularity associated to plates (namely, higher order boundary conditions appearing in operator descriptions), as well as in-plane elliptic degeneracy in the pressure equation, create a variety of novel challenges for the analysis at hand. In addition, the issue of boundary conditions for the pressure, and their comparability to plate boundary conditions, are central issues in the work.

In what follows, we present a clear functional framework for the analysis of {\em weak solutions} to the quasi-static Biot plate system. To the best knowledge of the authors, the Biot plate system has not been treated in an operator-theoretic framework. We work to reduce the pressure-displacement system to an abstract, implicit evolution posed on appropriate spaces, for which we can invoke the theory of \cite[III.3]{showmono} (developed in \cite{indiana} and based on a classical variational result of Lions---Theorem \ref{Lions} below). We obtain weak solutions to the quasi-static problem in this time-dependent framework, enabling future work on various nonlinear modifications of the dynamics as they arise in relevant biological applications  \cite{bcmw,bgsw}. For self-containedness, we include the proofs of the main abstract theorems employed here, Theorems \ref{ImplicitExistence} and \ref{ImplicitUniqueness}, in Appendix A.

 We also consider the inertial case of the Biot plate system, which to our knowledge has also not been rigorously addressed in the literature. We relegate the discussion of this case to Appendix B, owing to the fact that the approach is thematically distinct; indeed, for the inertial system we utilize traditional semigroup theory, as motivated by \cite{thermo,redbook}. We point out that the analysis is complicated by the subtle nature of the regularity classes in the problem, themselves associated to the disparate nature of the in-plane and transverse coupling. For the inertial, compressible case, we obtain generation of a strongly continuous semigroup of contractions on the appropriate state space \cite{pazy,redbook}.
 
 In view of space considerations and the mathematical emphasis of the work at hand, we omit an in depth discussion of the traditional poro-elasticity. We provide some references for the applications of  Biot models to physical systems from biological applications \cite{bcmw,bgsw},  to geosciences \cite{show1} (and references therein), to engineering \cite{rohan,taber} (and references therein). On the mathematical side, early works on the Biot system include \cite{zenisek,frenchpaper,indiana}. The work of Showalter (see \cite{show1}) largely developed the theory in the context of implicit, degenerate equations \cite{showmono,showold,dbshow,indiana}. {See \cite{referee} for an FEM discussion of the approximation of weak solutions to relevant linear plate models.} The coupled dynamics considered here are challenging dynamics to work with, both from the point of view of variational approaches to weak solutions, as well as refinements of traditional semigroup methods.
\vskip.2cm
\noindent	{\bf Summary and Goals}: We place this Biot plate problem, as derived in \cite{mikelic}, in the context of {\em weak solutions for implicit, degenerate problems} as presented in \cite[III.3]{showmono}, and thereby obtain weak well-posedness of the system. The inertial, compressible case, being thematically disparate (it is {\em not an implicit problem}), is entirely contained in Appendix B, where we show that the associated evolution operator generates a strongly continuous semigroup of contractions on an appropriate state space.
	
\subsection{Physical Configuration and Mathematical Models}\label{modelsec}
In what follows, denote $\mathbf x = \langle x_1, x_2, x_3\rangle \in \mathbb R^3.$ We denote differentiation with respect to $x_i$ by $\partial_i$.

	\begin{figure}[h]
        \centering
        \includegraphics[width=0.5\textwidth]{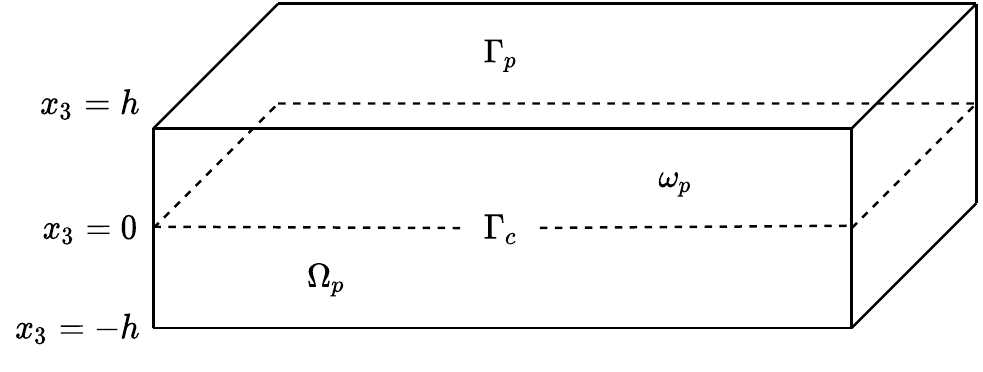}
    \end{figure}
    We consider a thin poro-elastic region $\Omega_p = \omega_p \times (-h,h)$ with center surface $\omega_p = (0,1)\times(0,1)$ at $x_3 = 0$. The corresponding boundaries are denoted $\Gamma_p = \partial \Omega_p$ and $\Gamma_c = \partial \omega_p$. We consider $x_3 \in (-h,h)$ as the transverse variable, whereas $x_1$ and $x_2$ denote the longitudinal and lateral directions, respectively. The model below was recently derived from 3D Biot dynamics in \cite{mikelic}, and the corresponding in- and out-of-plane dynamics are shown to decouple.  Therefore, in this treatment, we only consider the transverse dynamics, as is often done in standard linear plate theory. Let $w(x_1,x_2,t)$ denote the transverse displacement of the center surface ($x_3 = 0$) of the plate, with the Kirchhoff-Love hypothesis in place\footnote{among other assumptions, that plate filaments remain orthogonal to the deflected center surface \cite{ciarlet2}}. Let $p(\vec{x},t)$ represent the fluid pressure defined on the entire region $\Omega_p$. From \cite{mikelic}, the physical quantities of interest are: 
    \begin{itemize}
    \item $\rho_p \ge 0$ is the plate's density or {\em inertial parameter}; when $\rho_p>0$, the system is said to be {\em inertial}; when $\rho_p=0$, the system is said to be {\em quasi-static};
    \item $D>0$ is taken to be the plate's {\em flexural rigidity}, scaled to the system at hand;
    \item $\alpha>0$ is the so-called {\em Biot-Willis constant}, again appropriate scaling;
    \item $c_p\ge 0$ is the {\em storage coefficient} of the fluid-solid matrix, corresponding to net compressibility of constituents;
    \item $k_p \ge 0$ is the permeability of the porous matrix comprising the plate.
    \end{itemize} 
    The physical scalings for the above parameters and an extensive discussion are given in \cite{mikelic} (and, for general Biot systems, see \cite{show1} and references therein). A quantity of interest is the so called {\em fluid content} of the system, which is given here by ~$\zeta = c_pp-x_3\alpha \Delta w$.
    We allow the permeability to be a general function of space and time $k_p = k_p(\mathbf x,t)$, in order to permit recent applications where the fluid content's effect on the porosity (and hence permeability) is relevant---see, e.g., \cite{bgsw}. 
    
    The Biot dynamics of an inertial poro-elastic plate \cite{mikelic} are governed by
	\begin{equation}\label{FDPlate}
	\begin{cases}
	    \rho_p w_{tt} + D \Delta^2 w + \alpha \Delta \int_{-h}^h x_3p~dx_3 = f &\text{in } \omega_p,\\
	    [c_p p - \alpha x_3 \Delta w]_t - \partial_3(k_p\partial_3 p) = g & \text{in } \Omega_p,\\
	    w(x_1,x_2,0)=w_0(x_1,x_2),~~w_t(x_1,x_2,0)=w_1(x_1,x_2) & \text{in } \omega_p,\\
	    [c_pp-\alpha x_3\Delta w](\mathbf x,0)=d_0(\mathbf x) & \text{in } \Omega_p.
	\end{cases}
	\end{equation}
\begin{remark} In the reference \cite{mikelic}, the equations above are presented in non-inertial form with $\rho_p=0$. As discussed above, the in-plane dynamics are considered as there well. The in-plane equations of motion constitute (in the inertial case) a 2D dynamic (Navier) system of elasticity in $(x_1,x_2)$. 
In the scaling analysis presented in this reference, the transverse dynamics and in-plane dynamics formally decouple. For this reason we---and other authors \cite{bcmw}---consider only transverse dynamics, although this point may be relevant to the discussion of the compressible case $c_p=0$ (see Section \ref{future}).\end{remark}

One can rescale the system (\ref{FDPlate}) in a convenient form. In the pressure terms, take $\vec{x} \mapsto \rho_p c_p^{-1}\vec{x}$, and consider the state $v = w_t$. Then the dynamics become (with bars representing rescaled constants):
    \begin{equation}\label{FDPlate2}
	\begin{cases}
	    w_t-v = 0 & \text{in } \omega_p\\
	    v_t + \overline{D} \Delta^2 w + \overline{\alpha} \Delta \int_{-h}^h x_3p\,dx_3 = f &\text{in } \omega_p,\\
	    \partial_tp - \overline{\alpha}_p x_3\Delta v - \partial_3(\overline{k}_p\partial_3 p) = g & \text{in } \Omega_p,\\
	    	    w(x_1,x_2,0)=w_0(x_1,x_2),~~w_t(x_1,x_2,0)=w_1(x_1,x_2) & \text{in } \omega_p,\\
	    p(\mathbf x,0)-\alpha x_3\Delta w_0(x_1,x_2)=d_0(\mathbf x).
	\end{cases}
	\end{equation}
	 In the analysis of the {\it inertial} Biot plate system (Appendix B),  we will consider the rescaled equation (omitting bars). We will consider the inertial $\rho_p>0$ compressible $c_p > 0$ case, which is of {\em hyperbolic-parabolic} type, and is closely related in structure to systems of thermoelasticity \cite{thermo,redbook} though with certain peculiarities here.
	 	\begin{remark} When $\rho_p>0$ and $c_p=0$, the result is an implicit {\em hyperbolic} problem. This will be the focus of a future work, as, to the knowledge of the authors, there does not appear to be much discussion in the literature.\end{remark}

	The  main focus of the work at hand is the so called {\em quasi-static} version of \eqref{FDPlate}. When the deformation of the elastic matrix is slow and inertia is negligible, we then take $\rho_p = 0$ to obtain
	\begin{equation}\label{QSPlate}
	\begin{cases}
	    D \Delta^2 w + \alpha \Delta \int_{-h}^h x_3p\,dx_3 = f &\text{in } \omega_p,\\
	    [c_p p - \alpha x_3 \Delta w]_t - \partial_3(k_p\partial_3 p) = g & \text{in } \Omega_p,\\
	   	[c_pp-\alpha x_3\Delta w](\mathbf x,0)=d_0(\mathbf x) & \text{in } \Omega_p.
	\end{cases}
	\end{equation}
We note that the initial conditions have requisitely changed, reflecting the loss of the independently specified plate velocity state. The above system is of {\em elliptic-parabolic} type, and can be written as an implicit evolution equation \cite{showmono,dbshow}; when the constituents are incompressible $c_p=0$ (not considered in this treatment), the system may also degenerate.
	
	By way of boundary conditions, we are inclined toward  Neumann-type conditions for the pressure on the top and bottom $\{x_3=\pm h\}$ parts of $\Gamma_p$, and hinged boundary conditions for the rectangular plate on $\Gamma_c$. {\em The pressure equation itself requires no boundary condition on the lateral walls, as there is no $(x_1,x_2)$-differential action explicitly acting on $p$}. On the other hand, the elasticity equation \eqref{QSPlate}$_1$ compares $\Delta w$ and the $x_3$-moment of $p$ under a Laplacian, which suggests a boundary condition thereon.	Thus we have:
		\begin{equation}\label{BC}
        \begin{cases}
               w  = 0;~~D\Delta w+\alpha\int_{-h}^hx_3 p~dx_3=0 &\text{on } \Gamma_c,\\
            k_p\partial_{\vec{n}} p = 0 &\text{on } \{x_3=h\} \cup \{x_3=-h\}.
        \end{cases}
	\end{equation}
	Here, $\vec{n}$ is the unit normal vector on the boundary $\Gamma_p$, and on $\{x_3 = \pm h\}$ we have $\partial_{\mathbf n} = \pm \partial_3$. Note that on $\Gamma_c = \partial \omega_p$ the normal $\mathbf n$ coincides with $\Gamma_p$.
	
	\begin{remark} Our baseline interest in Neumann boundary conditions on the top and bottom of the domain comes from a desire to couple plate dynamics with, e.g., Stokes or 3D Biot systems---see \cite{bcmw}. In Section \ref{configs} below, we discuss various other configurations and related forthcoming work. We suffice to say that there is a nontrivial interaction between the choice of these boundary conditions and a direct application of the abstract theory. 
	\end{remark}
	\begin{remark}
	We also point out that the higher order pressure/plate boundary condition appears at the level above that of weak solutions. On the other hand, it becomes necessary to consider such boundary conditions in Appendix B, where  semigroup theory is invoked and strong solutions are obtained. 
	 \end{remark}

	\subsection{Natural Spaces}\label{spaces} We work in the $L^2(U)$ framework here, where $U \subseteq \mathbb R^n$ for $n=2,3$ is a given spatial domain. We denote standard $L^2(U)$ inner products by $(\cdot,\cdot)_U$.  Traditional Sobolev spaces of the form $H^s(U)$ and $H^s_0(U)$ (along with their duals) will be defined in the standard way \cite{kesavan}, with the $H^s(U)$ norm denoted by $||\cdot||_{s,U}$. For a Banach space $Y$ we denote its dual as $Y'$ and denote the associated duality pairing as $\langle \cdot, \cdot\rangle_{Y'\times Y}$.
	 
		Define the anisotropic Sobolev spaces and associated norm
    \begin{align*}
        H^{0,0,k}(\Omega_p) &= \set{ \phi \in L^2(\Omega_p): \partial_3 \phi, ..., \partial_3^k \phi \in L^2(\Omega_p)}, \quad k \geq 1,\\
        \norm{\phi}_{0,0,k} &= \left(\sum_{i=0}^k \norm{\partial_3^i \phi}_{0,\Omega_p}^2\right)^{1/2}.
    \end{align*}
    We will give special status to $V \equiv H^{0,0,1}(\Omega_p)$ in the analysis of the pressure equation. The mappings $\gamma_k \in \mathscr L(H^{k+1}(\Omega_p),H^{k+1/2}(\Gamma_p))$ denote the standard trace maps \cite{kesavan}, $k=0$ corresponding to the the Dirichlet, and $k=1$ to the Neumann trace.

    Lastly, define the space $W = H^2(\omega_p) \cap H_0^1(\omega_p)$, taken with the Laplacian norm, i.e., $||w||_W \equiv ||\Delta w||_{L^2(\omega_p)}$.  Note that the space $W$ is naturally associated to the (square root of the) hinged biharmonic operator, and this norm is equivalent to the standard $H^2(\omega_p)$ norm on $W$.

	\subsection{Main Result}
We begin by stating the definition of a weak solution in the quasi-static case. First, let us consider some the $d_0 \in V'$, with $f \in L^2(0,T;W')$ and $g \in L^2(0,T;V')$.

\begin{definition}[Quasi-static Weak Solution]\label{weaksol}
A solution to \eqref{QSPlate} with {$c_p> 0$} is represented by a pair of functions 
$$w \in L^2(0,T;W) \ \ \text{and}\ \  p \in L^2(0,T;V),$$ 
with $\zeta = c_pp -x_3\alpha \Delta w \in L^2(0,T;L^2(\Omega_p)) \cap H^1(0,T;V')$, such that: 
\vskip.1cm
\noindent (a) the following variational forms are satisfied for any $z \in L^2(0,T;W)$, and any \\$q \in \{ w \in L^2(0,T;V)\cap H^1(0,T;L^2(\Omega_p))~:~w(T)=0\}$:
\begin{align}  \label{weakform1}
D \int_0^T(\Delta w, \Delta z)_{\omega_p}dt +& \alpha  \int_0^T(p, x_3 \Delta z)_{\Omega_p}dt = \int_0^T\langle f, z \rangle_{W'\times W}dt,\\
\label{weakform2}
\int_0^T \big(k_p\partial_{x_3} p, \partial_{x_3} q\big)_{\Omega_p}& ~dt-\int_0^T(\zeta, q_t)_{\Omega_p}~dt = \int_0^T \langle g,q \rangle_{V'\times V} ~dt
\end{align}
(b) the initial condition holds in the sense of $V'$, namely that ~~$\ds \lim_{t\searrow 0} \zeta(t) = d_0 \in V'.$
\end{definition}

We now state two assumptions on the time-dependent permeability that will be crucial to our main results here.
	\begin{assumption}\label{KAssumption} [Bounds for Existence] We assume that permeability $k_p$ is an $L^\infty(\Omega_p \times (0,T))$ function such that 
    \[
        0 < k_* \leq k_p(\vec{x},t) \le k^*, \quad \forall\, \vec{x} \in \Omega_p, ~ \forall\, t \in [0,T].
    \]
    \end{assumption}
     To show uniqueness, we will require an additional assumption on the first derivative of $k_p$ which will ensure that $\Big\{-\partial_3[k_p(t)\partial_3]~:~t \in [0,T]\Big\}$ constitutes a {\em regular family} \cite{showmono}.
    \begin{assumption}\label{KAssumption2} [Bounds for Uniqueness]
    Assume that for a.e $\mathbf x \in \Omega_p$ the function $k_p(\vec{x},\cdot)$ is absolutely continuous in $t \in [0,T]$, with $| \partial_tk_p(\vec{x},t)| \leq K(t)$ and $K \in L^1(0,T)$.
	\end{assumption}

	The main result in this paper is the well-posedness of weak solutions the Biot plate in (\ref{QSPlate}). 
	\begin{theorem}[Quasi-static Weak Solution]\label{main2}
	    Let $c_p>0$, and $d_0 \in L^2(\Omega_p)$, with $f \in H^1(0,T;W')$, $g \in L^2(0,T;V')$. Assume that permeability $k_p$ satisfies the hypotheses of Assumption \ref{KAssumption}, then there exists a  weak solution $(w,p) \in L^2(0,T; V) \times L^2(0,T;W)$ to (\ref{QSPlate}) in the sense of the definition above. This solution satisfies the stability estimate
	          \begin{equation}\label{stability**}
       ||p||_{L^2(0,T;V)}^2+||w||_{L^2(0,T;W)}^2 \le C\left[||f||_{H^1(0,T;W')}^2+||g||^2_{L^2(0,T;V')}+||d_0||^2_{L^2(\Omega_p)}\right].
        \end{equation}
 Moreover, if $k_p$ satisfies Assumption \ref{KAssumption2}, then the solution is unique. 	\end{theorem}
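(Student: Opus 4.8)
The plan is to eliminate $w$ by an elliptic solve, recast \eqref{QSPlate}--\eqref{BC} as an abstract implicit evolution equation $\tfrac{d}{dt}(\mathcal B p) + \mathcal A(t)p = \widetilde g$ in $V'$, and invoke Theorems \ref{ImplicitExistence} and \ref{ImplicitUniqueness}. For each fixed $t$ and each datum in $L^2(\Omega_p)$, the plate equation \eqref{QSPlate}$_1$ with the hinged conditions \eqref{BC} on $\Gamma_c$ reads, weakly, $D(\Delta w,\Delta z)_{\omega_p} = \langle f,z\rangle_{W'\times W} - \alpha(p,x_3\Delta z)_{\Omega_p}$ for all $z\in W$; since $(\Delta\cdot,\Delta\cdot)_{\omega_p}$ is the inner product inducing $\|\cdot\|_W$, the Riesz representation theorem yields a unique $w = \mathcal K p + \mathcal K_f f\in W$ with $\|w\|_W\le C(\|p\|_{0,\Omega_p}+\|f\|_{W'})$. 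The fluid content is then $\zeta = c_p p - \alpha x_3\Delta w = \mathcal B p - \alpha x_3\Delta\mathcal K_f f$, where $\langle\mathcal B p,q\rangle \equiv c_p(p,q)_{\Omega_p} - \alpha(x_3\Delta\mathcal K p, q)_{\Omega_p}$; inserting $z=\mathcal K q$ and $z = \mathcal K p$ into the defining identity for $\mathcal K$ gives $\langle\mathcal B p, q\rangle = c_p(p,q)_{\Omega_p} + D(\Delta\mathcal K p,\Delta\mathcal K q)_{\omega_p}$, so that $\mathcal B\in\mathscr L(L^2(\Omega_p))\subset\mathscr L(V,V')$ is symmetric and monotone, with $\langle\mathcal B p,p\rangle = c_p\|p\|_{0,\Omega_p}^2 + D\|\Delta\mathcal K p\|_{0,\omega_p}^2\ge c_p\|p\|_{0,\Omega_p}^2$.

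With $\langle\mathcal A(t)p,q\rangle\equiv(k_p(t)\partial_3 p,\partial_3 q)_{\Omega_p}$, the pressure equation \eqref{QSPlate}$_2$ becomes $\tfrac{d}{dt}(\mathcal B p) + \mathcal A(t)p = g + \alpha x_3\Delta\mathcal K_f\partial_t f =: \widetilde g$, and $f\in H^1(0,T;W')$ ensures $x_3\Delta\mathcal K_f\partial_t f\in L^2(0,T;L^2(\Omega_p))\subset L^2(0,T;V')$, so $\widetilde g\in L^2(0,T;V')$; likewise \eqref{QSPlate}$_3$ becomes $\mathcal B p(0) = d_0 + \alpha x_3\Delta\mathcal K_f f(0)$, which is meaningful in $V'$ since $f\in C([0,T];W')$ and $d_0\in L^2(\Omega_p)\hookrightarrow V'$. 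It remains to check the hypotheses of Theorem \ref{ImplicitExistence}: $\mathcal B$ is bounded, symmetric, monotone (above); $t\mapsto\mathcal A(t)$ is measurable and, by Assumption \ref{KAssumption}, uniformly bounded with $\|\mathcal A(t)\|_{\mathscr L(V,V')}\le k^*$; and $\mathcal A(t)+\mathcal B$ is $V$-coercive, since for all $p\in V$
\[
\langle(\mathcal A(t)+\mathcal B)p,p\rangle = (k_p(t)\partial_3 p,\partial_3 p)_{\Omega_p} + \langle\mathcal B p,p\rangle \ge k_*\|\partial_3 p\|_{0,\Omega_p}^2 + c_p\|p\|_{0,\Omega_p}^2 \ge c\,\|p\|_V^2 .
\]
This inequality is the crux of the matter: $\mathcal A(t)$ is elliptically degenerate in $(x_1,x_2)$---it carries no in-plane derivatives and annihilates every function independent of $x_3$---so $V$-coercivity is recovered only because $c_p>0$ makes the $L^2(\Omega_p)$-term of $\mathcal B$ available, and this is precisely where the hypothesis $c_p>0$ is used. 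Granted these hypotheses, Theorem \ref{ImplicitExistence} produces $p\in L^2(0,T;V)$ with $\mathcal B p\in H^1(0,T;V')$; setting $w\equiv\mathcal K p+\mathcal K_f f\in L^2(0,T;W)$ and $\zeta\equiv\mathcal B p-\alpha x_3\Delta\mathcal K_f f\in L^2(0,T;L^2(\Omega_p))\cap H^1(0,T;V')$ and unwinding the abstract equation against the test classes of Definition \ref{weaksol} shows that $(w,p)$ is a weak solution.

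The stability bound \eqref{stability**} comes from the a priori energy estimate underlying Theorem \ref{ImplicitExistence}: pairing the equation with $p$ and integrating on $[0,T]$, the coercivity estimate bounds $\|p\|_{L^2(0,T;V)}^2$ from below, the endpoint term $\tfrac12\langle\mathcal B p(T),p(T)\rangle\ge 0$ is discarded, and $\tfrac12\langle\mathcal B p(0),p(0)\rangle$ is controlled from $\langle\mathcal B p(0),p(0)\rangle\ge c_p\|p(0)\|_{0,\Omega_p}^2$ together with $\langle\mathcal B p(0),p(0)\rangle=(d_0+\alpha x_3\Delta\mathcal K_f f(0),p(0))_{\Omega_p}$, which force $\langle\mathcal B p(0),p(0)\rangle\le c_p^{-1}\|d_0+\alpha x_3\Delta\mathcal K_f f(0)\|_{0,\Omega_p}^2\le C(\|d_0\|_{0,\Omega_p}^2+\|f\|_{H^1(0,T;W')}^2)$; the $\langle\widetilde g,p\rangle$ terms are absorbed by Young and Gr\"onwall inequalities, and $\|w\|_{L^2(0,T;W)}^2\le C\|p\|_{L^2(0,T;V)}^2+C\|f\|_{L^2(0,T;W')}^2$ from the elliptic estimate. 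Finally, uniqueness under Assumption \ref{KAssumption2} follows because a difference $\bar p$ of two solutions satisfies $\tfrac{d}{dt}(\mathcal B\bar p)+\mathcal A(t)\bar p=0$, $\mathcal B\bar p(0)=0$, and, since $|\partial_t k_p(\vec x,t)|\le K(t)\in L^1(0,T)$, the map $t\mapsto\langle\mathcal A(t)p,q\rangle$ is absolutely continuous with $|\tfrac{d}{dt}\langle\mathcal A(t)p,q\rangle|\le K(t)\|p\|_V\|q\|_V$; thus $\{\mathcal A(t)\}_{t\in[0,T]}$ is a \emph{regular family} in the sense of \cite{showmono}, Theorem \ref{ImplicitUniqueness} applies directly, and $\bar p = 0$, hence $\bar w = \mathcal K\bar p = 0$. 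I expect the principal difficulty to be the coercivity/degeneracy analysis above and, relatedly, the bookkeeping that places the $f$-contributions in the spaces making $\widetilde g\in L^2(0,T;V')$ and $\mathcal B p(0)\in V'$ well defined; the remainder is a careful translation of \eqref{QSPlate}--\eqref{BC} into the hypotheses of the abstract theorems.
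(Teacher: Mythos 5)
Your proposal is correct and follows essentially the same route as the paper: reduce to the implicit Cauchy problem $\frac{d}{dt}(\mathcal Bp)+\mathcal A(t)p=\widetilde g$ via the elliptic solve for $w$, verify that $\mathcal B$ (the paper's $c_p\mathbf I+B$) is bounded, self-adjoint and monotone with $\langle \mathcal Bp,p\rangle\ge c_p\|p\|_{0,\Omega_p}^2$, establish the $V$-coercivity of $2\mathcal A(t)+\lambda\mathcal B$ using $c_p>0$ to compensate the in-plane degeneracy, and invoke Theorems \ref{ImplicitExistence} and \ref{ImplicitUniqueness}, with the regular-family check under Assumption \ref{KAssumption2}. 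The only differences are presentational: you realize $\mathcal B$ variationally via Riesz representation and absorb $f$ into the source and initial datum, whereas the paper builds $B=\beta\tilde{\mathcal K}\mathcal K$ as an explicit operator composition (exploiting $\Delta_D\mathcal E^{-1}\Delta_D=\mathbf I$) and performs the equivalent translation $w\mapsto w-w_f$.
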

	
	\begin{remark} The type of initial conditions taken (i.e., $w(0)$ and/or $p(0)$ and/or $d_0$) is a non-trivial issue, as is the space in which they are taken. It suffices to say that in the case that $c_p>0$, specifying $d_0 \in L^2(\Omega_p)$ will be equivalent to specifying $p(0) \in L^2(\Omega_p)$, and subsequently, $w(0) \in W$. In other situations, this may not be the case---see the discussion in \cite{bw,borisnew}. Note, we (as well as nearly all other authors cited here) require higher regularity of initial data than is required by the baseline definition of the weak solution. \end{remark}
	
	Finally, we recall that the entire discussion of the inertial, compressible plate has been bundled in a self-contained appendix (Appendix B). There we show that the evolution operator associated to the $\rho_p, c_p>0$ initial boundary value problem generates a traditional $C_0$ semigroup (in the sense of \cite{pazy}), rather than an implicit semigroup (in the sense of \cite{show1,frenchpaper}).

\section{Implicit, Degenerate Equations}\label{abstractsec}
The traditional, linear Biot system in 3D can be placed into the framework of an implicit, degenerate evolution equation \cite{show1,bw} in the pressure variable $p$. We demonstrate the applicability of a similar approach here (in the quasi-static case). Let us now consider a general abstract system (with time-dependent coefficients) written in the variable $p$ and operators $\mathcal B$ and $\{\mathcal A(t)~:~t \in [0,T]\}$:
\begin{equation}\label{gensys}\begin{cases}
[\mathcal B p]_t+\mathcal A(t) p = S \\
[\mathcal Bp](0) = d_0.\end{cases}\end{equation}
The system is {\em implicit} since the time derivative does not appear directly on $p$---in fact $p$ need not differentiable at all for solutions---and the natural initial data is $\mathcal [Bp](0)$. The system is {\em degenerate} if $\mathcal B$ has a non-trivial kernel in some sense (depending on the interpretation of the equation). There are many different applications for such systems \cite{showmono,dbshow}; for Biot problems posed on $L^2$, $\cB$ is a ``nice" zeroth order, nonlocal operator, and $\cA(t)$ is a maximal monotone operator on for each $t$.
There is a wealth of literature on implicit evolution equations, see, for instance, \cite{dbshow,showmono,showold,frenchpaper,indiana}.
	
	\subsection{Theory of Implicit, Degenerate Equations: Time-dependent Case}\label{abstractsec*}
	In this section we provide the general theory for \eqref{gensys}. What is presented below is not even the most general framework available as presented in \cite{showmono} or \cite{dbshow}. We outline the main theorems and provide the broad context for what we will employ below. For self-containedness---and since this theory does not appear prevalently in the literature---we provide the proofs of these theorems in Appendix A. 
		\subsubsection{Cauchy Problem and Definition of Solutions}\label{CauchySection}
		    
		    Let $V$ be a separable Hilbert space with dual $V'$ (which is not identified with $V$ here). Assume $V$ densely and continuously includes into another Hilbert space $H$, which {\em is} identified with its dual:
            $$V \hookrightarrow H \equiv H' \hookrightarrow V'.$$
            We denote the inner-product in $H$ simply as $(\cdot,\cdot)$, with $(h,h)=||h||_H^2$ for each $h \in H$. Similarly, we denote the $V'\times V$ duality pairing as ~$\langle \cdot, \cdot \rangle$. (When $h\in H$, we can identify $\langle h, h \rangle = ||h||_H^2$ as well.)
            Assume for each $t \in [0,T]$ that $\cA(t) \in \mathscr L(V,V')$  with $\braket{\mathcal A(\cdot)u,v} \in L^{\infty}(0,T)$ for each pair $u,v \in V$. Take $\cB \in \mathscr L(H)$. 
            Finally, suppose that $u_0 \in H$ and $S \in L^2(0,T;V')$ are the specified data. 
            
            In this setup, we can define the weak (implicit-degenerate) Cauchy problem to be solved as:
            
    \noindent         Find $u \in L^2(0,T;V)$ such that
            \begin{equation}\label{ImplicitCauchy}
            \begin{cases}
            \dfrac{d}{dt}[\cB u]+\cA(\cdot)u=S \in ~L^2(0,T;V') \\
            [\cB u](0)=\cB u_0 \in V'.
            \end{cases}
            \end{equation}
            The time derivative above is taken in the sense of $\mathscr D'(0,T)$, and since such a solution would have $\cB u \in H^1(0,T;V')$ (with the natural inclusion $V \hookrightarrow V'$ holding), $\cB u$ has point-wise (in time) values into $V'$ and the initial conditions makes sense through the boundedness of $\cB$.             
            \begin{remark}\label{ICremark} We note that it would, in fact, be natural to consider initial data specified only as $[\cB u](0)=d_0$, rather than specifying a $u_0 \in H$ such that $d_0 = \cB u_0$. This is a notably subtle issue. When $\cB$ is invertible on the space where $d_0$ is specified, then the aforementioned assumptions are equivalent. On the other hand, in much of the literature (e.g., \cite{bw,bcmw,bgsw,showold, showmono, zenisek}), the data is specified on $u_0$. See \cite{show1} in the {\em holomorphic} cases for data specified only as $d_0$.\end{remark}
            
            We also define a weak solution to the implicit Cauchy problem in \eqref{ImplicitCauchy} as follows: 
            \begin{definition}[Weak Solution]\label{weakimplicitsol} The function $u \in L^2(0,T;V)$ with
            \begin{equation}\label{weakweak}
                -\int_0^T(\cB u(t),v'(t))dt+\int_0^T\langle \cA(t)u(t), v(t)\rangle dt=\int_0^T\langle S(t), v(t) \rangle dt+(\cB u_0,v(0))
            \end{equation}
            holding for all 
            $$v \in \{ w \in L^2(0,T;V)\cap H^1(0,T;H)~:~w(T)=0\}$$
            is called the {\em weak solution} to \eqref{ImplicitCauchy}.
     \end{definition}
            This definition is equivalent (via the standard mechanism \cite[p.115]{showmono}) to a function $u \in L^2(0,T;V)$ that satisfies the Cauchy problem \eqref{ImplicitCauchy}  as described above.
						
		\subsubsection{Existence and Stability}\label{ext}
		The theorem we present for existence of solutions in the above framework seems to be the most general available. We note that the result is, indeed, quite general, and seems to unique in the literature in that it accommodates the time-dependent setting. The original reference is \cite{indiana}, but a nice exposition by the same author can be found in \cite[III.3]{showmono}. There are other approaches for the autonomous problem that can be seen in, e.g., \cite{show1,showold,showmono,zenisek,dbshow}.
        
        From \cite{showmono}, quite general criteria  yield existence. Note that we define monotone to mean:
        $$\text{ $\cA(t)$ is monotone on $V$ } ~\iff~\langle \cA(t)p,p\rangle \ge 0,~~\forall ~p \in V, ~ t \in [0,T].$$
        In the case of $\cB$, which is maps $H$ to $H$, the definition is simplified to mean
        $$\text{ $\cB$ is monotone on $H$ } ~\iff~(\cB p,p) \ge 0,~~\forall ~p \in H.$$
        
        The proof of the result below hinges on a classical generalization of Lax-Milgram \cite[p.109]{showmono}.
        \begin{theorem}\label{ImplicitExistence}
        Suppose that $V,H$ are separable Hilbert spaces, with $V\hookrightarrow H \hookrightarrow V'$ as dense, continuous inclusions. Also, let $\cA(t)$ be monotone as above for each $t \in [0,T]$, and assume $\cB$ is self-adjoint and monotone on $H$. Let $u_0 \in H$ and $S \in L^2(0,T;V')$. 
        
        If there exist constants $\lambda, c>0$ such that
        $$2\langle \cA(t)v, v\rangle+\lambda (\cB v,v) \ge c||v||_V^2,~~\forall v \in V,~~\forall t \in [0,T],$$
        then there exists a weak solution to the Cauchy problem described in \eqref{weakweak}.
        The particular solution obtained satisfies the stability estimate
        \begin{equation}\label{stability}
        ||u||_{L^2(0,T;V)}^2 \le C(c,\lambda)\left[||S||_{L^2(0,T;V')}^2+(\cB u_0,u_0)\right].
        \end{equation}
        \end{theorem}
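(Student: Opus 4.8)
The plan is to recast the weak Cauchy problem \eqref{weakweak} as a single variational equation of the form $E(u,v)=L(v)$ and to solve it via the classical generalization of Lax--Milgram due to Lions (Theorem \ref{Lions}, cf.\ \cite[p.109]{showmono}), which produces a solution in a large space $\mathcal F$ out of coercivity of $E$ only on a smaller test space $\mathcal G$.

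First I would remove the parameter $\lambda$ by the substitution $u=e^{\lambda t}\widetilde u$ (equivalently, multiplying the equation by $e^{-\lambda t}$). Since $\tfrac{d}{dt}[\cB e^{\lambda t}\widetilde u]=e^{\lambda t}\big(\tfrac{d}{dt}[\cB\widetilde u]+\lambda\cB\widetilde u\big)$, this replaces $\cA(t)$ by $\cA(t)+\lambda\cB$ and $S$ by $e^{-\lambda t}S$, leaves the initial condition $[\cB u](0)=\cB u_0$ intact, and is a bounded isomorphism of $L^2(0,T;V)$ (and of the test space below) with norm controlled by $e^{\lambda T}$. Because $\cB$ is monotone, the hypothesis gives $\langle(\cA(t)+\lambda\cB)v,v\rangle\ge\tfrac12\big(2\langle\cA(t)v,v\rangle+\lambda(\cB v,v)\big)\ge\tfrac c2\|v\|_V^2$, so after this reduction we may assume $\cA(t)$ is $V$-coercive with constant $c_0:=c/2$, uniformly in $t$; the final constant $C(c,\lambda)$ will then absorb $e^{2\lambda T}$ and $c_0$.

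Next I would take $\mathcal F=L^2(0,T;V)$ and $\mathcal G=\{v\in L^2(0,T;V)\cap H^1(0,T;H):v(T)=0\}$ with the inner product $\langle u,v\rangle_{\mathcal G}=\int_0^T(u,v)_V\,dt+(\cB u(0),v(0))$; the associated norm dominates $\|\cdot\|_{\mathcal F}$ and already separates points of $\mathcal G$ through its $L^2(0,T;V)$ part. Define
\[
E(u,v)=-\int_0^T(\cB u,v')\,dt+\int_0^T\langle\cA(t)u,v\rangle\,dt,\qquad L(v)=\int_0^T\langle S,v\rangle\,dt+(\cB u_0,v(0)),
\]
so that $E(u,v)=L(v)$ for all $v\in\mathcal G$ is precisely \eqref{weakweak}. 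I would then verify the three hypotheses of Theorem \ref{Lions}: \emph{(i)} $E(\cdot,v)$ is $\mathcal F$-continuous for each fixed $v$ --- here one passes from $\langle\cA(\cdot)u,v\rangle\in L^\infty(0,T)$ for all $u,v$ to $\sup_t\|\cA(t)\|_{\mathscr L(V,V')}<\infty$ by uniform boundedness, while the $\cB$-term uses $\cB\in\mathscr L(H)$ and $V\hookrightarrow H$; \emph{(ii)} $L\in\mathcal G'$, the only delicate term being $(\cB u_0,v(0))$, which is bounded by factoring through $\cB^{1/2}$: $(\cB u_0,v(0))=(\cB^{1/2}u_0,\cB^{1/2}v(0))\le(\cB u_0,u_0)^{1/2}(\cB v(0),v(0))^{1/2}\le(\cB u_0,u_0)^{1/2}\|v\|_{\mathcal G}$; \emph{(iii)} coercivity $E(v,v)\ge c'\|v\|_{\mathcal G}^2$ on $\mathcal G$, which follows since self-adjointness of $\cB$ and $v\in H^1(0,T;H)$ make $t\mapsto(\cB v(t),v(t))$ absolutely continuous with $\tfrac{d}{dt}(\cB v,v)=2(\cB v,v')$, whence $-\int_0^T(\cB v,v')\,dt=\tfrac12(\cB v(0),v(0))-\tfrac12(\cB v(T),v(T))=\tfrac12(\cB v(0),v(0))$ using $v(T)=0$, and adding $\int_0^T\langle\cA(t)v,v\rangle\,dt\ge c_0\|v\|_{L^2(0,T;V)}^2$ gives $E(v,v)\ge\min(c_0,\tfrac12)\|v\|_{\mathcal G}^2$. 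Theorem \ref{Lions} then yields $u\in\mathcal F=L^2(0,T;V)$ with $E(u,v)=L(v)$ on $\mathcal G$ and $\|u\|_{\mathcal F}\le\|L\|_{\mathcal G'}/\min(c_0,\tfrac12)$; undoing the exponential substitution produces \eqref{stability}, and the equivalence of this weak identity with the Cauchy formulation \eqref{ImplicitCauchy} is the standard distributional argument \cite[p.115]{showmono}.

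The hard part is contending with the degeneracy of $\cB$. Since $\cB$ is only monotone, $(\cB\,\cdot\,,\cdot)^{1/2}$ is a mere seminorm, so one cannot work in a fixed energy space; everything must be arranged so that at $t=0$ only the Cauchy--Schwarz inequality for the $\cB$-seminorm is ever invoked (and never any coercivity of $\cB$ on $H$), which is exactly what forces the form of $\|\cdot\|_{\mathcal G}$ and the $\cB^{1/2}$-factorization in step (ii). A second, more technical, point is that the coercivity identity in (iii) is legitimate only because $\cB$ is self-adjoint: this is what validates the product rule $\tfrac{d}{dt}(\cB v,v)=2(\cB v,v')$ and hence the integration by parts that turns the time-derivative term into the nonnegative boundary contribution $\tfrac12(\cB v(0),v(0))$.
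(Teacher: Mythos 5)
Your proposal is correct and follows essentially the same route as the paper's Appendix A proof: the exponential shift to absorb $\lambda\cB$, the spaces $\mathcal F=L^2(0,T;V)$ and $\mathcal G$ with the norm augmented by the $\cB$-seminorm at $t=0$, the integration by parts $-\int_0^T(\cB v,v')\,dt=\tfrac12(\cB v(0),v(0))$ via self-adjointness, and the Lions generalization of Lax--Milgram. You in fact supply a few details the paper omits (continuity of $E(\cdot,v)$ via uniform boundedness, the $\cB^{1/2}$ Cauchy--Schwarz bound on $L$, and the explicit derivation of the stability estimate from $\|u\|_{\mathcal F}\le\|L\|_{\mathcal G'}/\min(c_0,\tfrac12)$), but the argument is the same.
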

        It is important to note that only the particular solution given in the construction satisfies the stability-type estimate. On the other hand, to quote \cite{showmono}, ``much non-uniqueness is possible" at this abstract level---owing to the ability to translate the solution by $ker(\cB)\cap ker(\mathcal A(t))$. Additionally, note that the above theorem requires specification of $u(0)$, rather than the natural quantity $[\mathcal B p](0)$; this is a point of note in the discussion of implicit evolution equations \cite{borisnew}. 
        
        \subsubsection{Uniqueness}
        The general theorem for uniqueness in this context utilizes the notion of a {\em regular family} of operators. 
        \begin{definition}\label{regular}
            The family $\{\cA(t):V \to V'~:~t \in [0,T]\}$ of operators is regular if for every pair $u,v \in V$, the function $\langle \cA(\cdot)u,v\rangle $ is absolutely continuous on $[0,T]$ and there exists a $K \in L^1(0,T)$ such that
            \[
                \left| \frac{d}{dt} \langle \cA(t)u,v\rangle\right| \leq K(t) \|u\|_V~\|v\|_V, \quad u,v \in V, \text{ a.e. }t \in [0,T].
            \]
        \end{definition}
        \noindent Note that the proof of the following theorem also relies on the Lions result \cite[p.109]{showmono}.
        \begin{theorem}\label{ImplicitUniqueness}
            With the hypotheses of Theorem \ref{ImplicitExistence} in force, assume that $\{\cA(t)~:~t \in [0,T]\}$ is a regular family of self-adjoint operators. Then the solution in Theorem \ref{ImplicitExistence} is unique and it requisitely satisfies stability estimate \eqref{stability}.             
        \end{theorem}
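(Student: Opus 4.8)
The strategy is to reduce to the homogeneous problem and then combine a variational energy identity---obtained by testing against a time-antiderivative of the solution---with the coercivity hypothesis of Theorem \ref{ImplicitExistence} to force the solution to vanish. Suppose $u_1,u_2 \in L^2(0,T;V)$ are two weak solutions in the sense of Definition \ref{weakimplicitsol} for the same data $(S,u_0)$. Their difference $u:=u_1-u_2$ satisfies \eqref{weakweak} with $S=0$ and with $(\cB u_0,v(0))$ replaced by $0$. Testing first against $\mathscr D(0,T;V)$ shows $\tfrac{d}{dt}[\cB u]=-\cA(\cdot)u$ in $\mathscr D'(0,T;V')$; since the pointwise $L^\infty$ bound on $\langle\cA(\cdot)\phi,\psi\rangle$ upgrades, via uniform boundedness and separability of $V'$, to $\cA(\cdot)\in L^\infty(0,T;\mathscr L(V,V'))$, this identity holds in $L^2(0,T;V')$, so $\cB u\in H^1(0,T;V')\hookrightarrow C([0,T];V')$---in fact $\tfrac12$-H\"older into $V'$---with $[\cB u](0)=0$ in $V'$. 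It remains only to show $u\equiv 0$; the estimate \eqref{stability} is then inherited from the particular solution produced in Theorem \ref{ImplicitExistence}.

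The main step is to fix $s\in(0,T]$ and test \eqref{weakweak} (homogeneous) against $v_s(t):=\int_t^s u(\tau)\,d\tau$ for $t\le s$, extended by $0$ on $[s,T]$; since $u\in L^2(0,T;V)$ this $v_s$ lies in the admissible test class, with $v_s'=-u$ on $(0,s)$ and $v_s(T)=v_s(s)=0$. The $\cB$-term contributes $\int_0^s(\cB u,u)\,dt\ge 0$ by self-adjointness and monotonicity of $\cB$. For the $\cA$-term, substitute $u=-v_s'$ on $(0,s)$; because $\{\cA(t)\}$ is a \emph{regular} family of \emph{self-adjoint} operators, $t\mapsto\langle\cA(t)v_s(t),v_s(t)\rangle$ is absolutely continuous with derivative $\langle\dot\cA(t)v_s,v_s\rangle+2\langle\cA(t)v_s',v_s\rangle$ and $|\langle\dot\cA(t)v_s,v_s\rangle|\le K(t)\|v_s\|_V^2$. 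Integrating over $[0,s]$, using $v_s(s)=0$, and discarding the nonnegative boundary contribution $\tfrac12\langle\cA(0)v_s(0),v_s(0)\rangle$ via monotonicity of $\cA(0)$, one obtains the key inequality
\[ \int_0^s(\cB u,u)\,dt\;\le\;\tfrac12\int_0^s K(t)\,\|v_s(t)\|_V^2\,dt. \]

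It then remains to convert this into control of $\|u\|_V$ in spite of the possible degeneracy of $\cB$. The cleanest route is to also invoke the ``chain rule'' lemma for self-adjoint monotone $\cB$ (see \cite[III.1]{showmono}): the scalar $t\mapsto(\cB u(t),u(t))$ coincides a.e.\ with an absolutely continuous $\cE$ satisfying $\cE'=2\langle[\cB u]',u\rangle=-2\langle\cA(\cdot)u,u\rangle$, and $\cE(0)=0$---indeed $0\le\cE(t)\le\|[\cB u](t)\|_{V'}\|u(t)\|_V\le C\sqrt t\,\|u(t)\|_V$, so $\cE(0)>0$ would force the non-integrable lower bound $\|u(t)\|_V^2\ge c_0 t^{-1}$ near $0$, contradicting $u\in L^2(0,T;V)$. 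Monotonicity of $\cA(t)$ makes $\cE$ nonincreasing from $\cE(0)=0$ while $\cE\ge 0$, hence $\cE\equiv 0$ and $\langle\cA(t)u(t),u(t)\rangle=0$ a.e.; applying the coercivity inequality $2\langle\cA(t)w,w\rangle+\lambda(\cB w,w)\ge c\|w\|_V^2$ with $w=u(t)$ yields $\|u(t)\|_V=0$ a.e., i.e.\ $u\equiv 0$. (Alternatively one can keep the displayed inequality and couple it with the coercivity bound on $v_s(0)$, using $(\cB v_s(0),v_s(0))\le s\int_0^s(\cB u,u)\,dt$, in a Gronwall bootstrap over small $s$; a duality argument against the backward adjoint problem---available since $\cA(t)$ is self-adjoint---is a third option.) I expect the main obstacle to be precisely this reconciliation of degeneracy with coercivity: $(\cB u,u)$ is blind to $\ker\cB$, so the energy identity alone cannot give $u\equiv 0$, and feeding in the coercivity rigorously rests on the chain-rule lemma for $(\cB u(t),u(t))$ and on the vanishing of the initial energy---both of which hinge on the $\tfrac12$-H\"older regularity of $\cB u$ into $V'$. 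The self-adjointness and $L^1$-in-time derivative bound on $\cA(\cdot)$ (the regular-family hypothesis) are exactly what legitimize the time integration-by-parts in the $\cA$-term; without them the displayed inequality is unavailable.
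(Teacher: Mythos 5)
Your proposal is correct, but the argument that actually closes it is genuinely different from the paper's. The paper follows Showalter: it tests the homogeneous equation against $v(t)=-\int_t^s u\,d\tau$, uses the regular-family product rule (Proposition \ref{RegularProp}) to integrate by parts in the $\cA$-term, and---this is the crucial point---\emph{retains} the boundary term $\langle \cA(0)v(0),v(0)\rangle$, to which it applies the exponentially shifted coercivity to get $c\|U(s)\|_V^2$ on the left with $U(s)=\int_0^s u\,d\tau=-v(0)$; the right-hand side is then bounded by $\int_0^s K(t)\|v\|_V^2\,dt$ via the regular-family hypothesis, and a Gr\"onwall bootstrap over small subintervals finishes. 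You set up the same test function but then \emph{discard} $\tfrac12\langle \cA(0)v_s(0),v_s(0)\rangle$ by monotonicity, which leaves your displayed inequality too weak to run that bootstrap (your parenthetical Gr\"onwall alternative would not close as sketched, since $(\cB v_s(0),v_s(0))$ alone does not control $\|v_s(0)\|_V$). However, your actual main route---the chain-rule identity $\tfrac{d}{dt}(\cB u,u)=2\langle (\cB u)',u\rangle$ for self-adjoint monotone $\cB$, the $\tfrac12$-H\"older argument forcing the initial energy to vanish, monotonicity of $\cA(t)$ to make the energy nonincreasing, and the pointwise coercivity $2\langle \cA(t)u,u\rangle+\lambda(\cB u,u)\ge c\|u\|_V^2$ applied to $u(t)$---is complete and bypasses the integrated test function entirely. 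Notably, it uses only monotonicity and (uniform) boundedness of $\cA(t)$, not the regular-family hypothesis or self-adjointness of $\cA(t)$: those hypotheses are exactly what legitimize Showalter's integration by parts in time, whereas in your argument all the weight falls on the chain-rule lemma for degenerate $\cB$ (true, but it must be proved by mollification in $t$ or cited precisely) and on the identification $(\cB u)'=-\cA(\cdot)u\in L^2(0,T;V')$. The trade-off: the paper's proof stays at the level of the weak formulation and reproduces the cited abstract theory verbatim; yours is shorter and indicates that, under the remaining hypotheses, the regularity assumption on $t\mapsto\cA(t)$ is an artifact of the method rather than intrinsic to uniqueness---a point worth making explicit if you pursue this route.
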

\section{Plate Model and Abstract Setup}
We consider the quasi-static ($\rho_p=0$) compressible ($c_p>0$) case of the Biot plate dynamics here.
	\subsection{PDE Model}
As described in Section \ref{modelsec}, the primary system of interest here is:
	\begin{equation}\label{QSPlate*} 
    	\begin{cases}
    	    D \Delta^2 w + \alpha \Delta \int_{-h}^h x_3p\,dx_3 = f &\text{in } \omega_p,\\
    	    [c_p p - \alpha x_3 \Delta w]_t - \partial_3(k_p\partial_3 p) = g & \text{in } \Omega_p,\\
    	   	[c_pp-\alpha x_3\Delta w](\mathbf x,0)=d_0(\mathbf x) & \text{in } \Omega_p,\\
            w  = 0;~~D\Delta w+\alpha\int_{-h}^hx_3 p~dx_3=0 &\text{on } \Gamma_c,\\
            \partial_{3} p = 0 &\text{on } \{x_3=\pm h\}.
        \end{cases}
	\end{equation}
Note that for $x_3=\pm h$ we have simplified the Neumann condition using the Assumption \ref{KAssumption} (so $k_p>k_*>0$) and that $\mathbf n = \pm \mathbf e_3$ on the top and bottom of $\Gamma_p$.
	
	\subsection{Operators}\label{Operators}
	
    The goal of this section is to establish the specific operators that will allow us to place the dynamics in the framework of the abstract Theorems \ref{ImplicitExistence} and \ref{ImplicitUniqueness}. Please recall the definitions  in Section \ref{spaces}.

    \subsubsection{Constituent Operators}\label{constOp}
    
    Define, for $p \in L^2(\Omega_p)$, the moment operator $\mathcal{K} : L^2(\Omega_p) \to L^2(\omega_p)$ by ~~$\ds \mathcal{K}(p) = \int_{-h}^h x_3 p\,dx_3.$ Similarly, let $\tilde{\mathcal{K}} \in \mathscr{L}(L^2(\omega_p),L^2(\Omega_p))$ be defined by $\tilde{\mathcal{K}}(q) = x_3q$ for every $q \in L^2(\omega_p)$. This is suggestive of the fact that
    \begin{equation}\label{KAdjoint}
        (\mathcal{K}p,q)_{\omega_p} = (p, \tilde{\mathcal K}q)_{\Omega_p}, \qquad p \in L^2(\Omega_p), q \in L^2(\omega_p).
    \end{equation}

    \begin{proposition}\label{Kbounded}
	    For all $s \geq 0$, $\mathcal{K} \in \mathscr{L}(H^s(\Omega_p), H^s(\omega_p))$ and $\tilde{\mathcal{K}} \in \mathscr{L}(H^s(\omega_p),H^s(\Omega_p))$. 
	\end{proposition}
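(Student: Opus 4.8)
\textbf{Proof proposal for Proposition \ref{Kbounded}.}

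The plan is to handle $\tilde{\mathcal K}$ first, since multiplication by the smooth bounded function $x_3$ on the bounded domain $\Omega_p$ is the more transparent of the two maps, and then to obtain the claim for $\mathcal K$ by a combination of a one-dimensional integration estimate in the transverse variable and the duality relation \eqref{KAdjoint}. For $\tilde{\mathcal K}$, I would argue first for integer $s$: if $q \in H^s(\omega_p)$ then $\tilde{\mathcal K}q(x_1,x_2,x_3) = x_3 q(x_1,x_2)$ has all mixed partials up to order $s$ in $L^2(\Omega_p)$, because $\partial_3^j (x_3 q)$ is $q$ (for $j=1$), $0$ (for $j\ge 2$), or $x_3 \partial_1^{a}\partial_2^{b} q$ (for the purely tangential derivatives), and in each case one integrates $|x_3|^2 \le h^2$ out over $(-h,h)$ against an $L^2(\omega_p)$ function of $(x_1,x_2)$; summing gives $\|\tilde{\mathcal K}q\|_{s,\Omega_p} \le C(h,s)\|q\|_{s,\omega_p}$. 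The non-integer case then follows by interpolation, since $\tilde{\mathcal K}$ is bounded on the integer endpoints and the $H^s$ scale is obtained by complex (or real) interpolation \cite{kesavan}; alternatively one can use the $H^s$ norm defined via difference quotients in the tangential directions and repeat the pointwise $|x_3|\le h$ estimate directly, avoiding interpolation altogether.

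For $\mathcal K$, the key point is that $\mathcal K p(x_1,x_2) = \int_{-h}^h x_3\, p(x_1,x_2,x_3)\,dx_3$ is, for a.e.\ fixed $(x_1,x_2)$, an $L^2$-in-$x_3$ inner product of $p(x_1,x_2,\cdot)$ against the fixed function $x_3 \in L^2(-h,h)$; hence by Cauchy--Schwarz $|\mathcal K p(x_1,x_2)|^2 \le \big(\int_{-h}^h x_3^2\,dx_3\big)\int_{-h}^h |p(x_1,x_2,x_3)|^2\,dx_3$, and integrating in $(x_1,x_2)$ gives $\|\mathcal K p\|_{0,\omega_p} \le C(h)\|p\|_{0,\Omega_p}$, i.e.\ the case $s=0$. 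Since tangential derivatives $\partial_1,\partial_2$ commute with the transverse integration, $\partial_1^a\partial_2^b(\mathcal K p) = \mathcal K(\partial_1^a\partial_2^b p)$, so the same Cauchy--Schwarz bound applied to each tangential derivative up to total order $s$ yields boundedness $H^s_{\mathrm{tang}} \to H^s(\omega_p)$ where $H^s_{\mathrm{tang}}$ uses only $(x_1,x_2)$-derivatives; since the full $H^s(\Omega_p)$ norm dominates this tangential seminorm, we get $\mathcal K \in \mathscr L(H^s(\Omega_p),H^s(\omega_p))$ for integer $s$, and again interpolation fills in the rest. (One could also simply note that \eqref{KAdjoint} identifies $\mathcal K$ as the $L^2$-adjoint of $\tilde{\mathcal K}$ and invoke boundedness of adjoints together with the self-duality of the $L^2$-based $H^s$ scale, but the direct computation is cleaner and avoids identifying dual spaces.)

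The only genuinely delicate point — and the step I would be most careful about — is the non-integer $s$ claim, because one must make sure the interpolation identities $[H^{s_0}(U),H^{s_1}(U)]_\theta = H^s(U)$ hold for the domains $\omega_p$ and $\Omega_p$ at hand; these are Lipschitz (in fact smooth, being rectangular boxes) so the standard results apply, but strictly speaking the rectangle $\Omega_p$ has edges, so I would lean on the fact that $\Omega_p$ is a finite product of intervals and that the relevant Sobolev spaces on a box can be built by a tensor-product / extension argument, or else avoid the issue entirely by working with the intrinsic (Gagliardo) seminorm and carrying the pointwise $|x_3|\le h$ bound through the double integral defining $[\,\cdot\,]_{H^s}$ directly. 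Either way the estimate is uniform in $x_3$, so no new constants beyond powers of $h$ appear, and the proposition follows.
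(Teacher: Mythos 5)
Your argument is correct and matches the paper's proof almost exactly: the paper also bounds $\mathcal K$ by commuting the tangential derivatives $\partial_1^{\alpha_1}\partial_2^{\alpha_2}$ past the transverse integration and applying Cauchy--Schwarz against $x_3\in L^2(-h,h)$ (yielding the constant $\sqrt{2h^3/3}$), and it simply declares the boundedness of $\tilde{\mathcal K}$ to be ``clear.'' The one place you go beyond the paper is the fractional-$s$ case: the paper's multi-index sum is written as if $s$ were an integer and the non-integer case is left implicit, so your interpolation (or Gagliardo-seminorm) remark is a harmless extra precaution rather than a divergence in method.
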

	\begin{proof}
	    Let $s \geq 0$ and $p \in H^s(\Omega_p)$ be arbitrary. Recall that
        \[
            \norm{\mathcal{K}(p)}_{s,\omega_p}^2 \equiv \sum_{|\alpha| \leq s} \norm{D_{\omega_p}^\alpha \mathcal{K}(p)}_{0,\omega_p}^2,
        \]
        where $D^\alpha_{\omega_p} = \partial_1^{\alpha_1}\partial_2^{\alpha_2}$. So, for any multi-index $|\alpha| \leq s$,
        \begin{align} \label{Kseminorm}
            \norm{D_{\omega_p}^\alpha \mathcal{K}(p)}_{0,\omega_p}^2 &
            = \int_{\omega_p} \left( \int_{-h}^h x_3 D_{\omega_p}^\alpha p\,dx_3\right)^2 \, d\omega_p = \int_{\omega_p} |\mathcal{K}(D_{\omega_p}^\alpha p)|^2 \, d\omega_p.
        \end{align}
        We have via Cauchy-Schwarz that
        \begin{align}\label{Kabs}
            |\mathcal{K}(D_{\omega_p}^\alpha p)| &= \left| \int_{-h}^h x_3 D_{\omega_p}^\alpha p\, dx_3\right| 
            \le \sqrt{\frac{2h^3}{3}}\left( \int_{-h}^h |D_{\omega_p}^\alpha p|^2\,dx_3\right)^{1/2}
        \end{align}
        for every multi-index $|\alpha| \leq s$. It follows from (\ref{Kseminorm})--(\ref{Kabs}) that
        \begin{equation}\label{Knorm}
            \norm{\mathcal{K}(p)}_{s,\omega_p}^2 = \sum_{|\alpha| \leq s} \norm{D_{\omega_p}^\alpha \mathcal{K}(p)}_{0,\omega_p}^2 \leq \frac{2h^3}{3} \norm{p}_{s,\Omega_p}^2 \quad \forall\, p \in H^s(\Omega_p), s \geq 0.
        \end{equation}

    The boundedness asserted for $\tilde{\mathcal K}$ is clear.
    
	\end{proof}
The following is an immediate consequences of (\ref{KAdjoint}) and Proposition \ref{Kbounded}.
\begin{corollary} The operator
$\mathcal K :  L^2(\Omega_p) \to L^2(\omega_p)$ has adjoint $\mathcal K^*=\tilde{\mathcal K}: L^2(\omega_p) \to L^2(\Omega_p)$.
\end{corollary}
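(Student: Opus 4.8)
The plan is to verify that $\tilde{\mathcal K}$ satisfies the defining property of the Hilbert-space adjoint of $\mathcal K$ and then invoke uniqueness of the adjoint. Recall that for a bounded linear operator $\mathcal K \in \mathscr L(L^2(\Omega_p),L^2(\omega_p))$ between Hilbert spaces, its adjoint $\mathcal K^* \in \mathscr L(L^2(\omega_p),L^2(\Omega_p))$ is the unique bounded operator for which $(\mathcal K p, q)_{\omega_p} = (p, \mathcal K^* q)_{\Omega_p}$ holds for all $p \in L^2(\Omega_p)$, $q \in L^2(\omega_p)$.

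First I would apply Proposition \ref{Kbounded} with $s=0$, which gives that $\mathcal K \in \mathscr L(L^2(\Omega_p),L^2(\omega_p))$ and $\tilde{\mathcal K} \in \mathscr L(L^2(\omega_p),L^2(\Omega_p))$ are both bounded; in particular the adjoint $\mathcal K^*$ exists as an element of $\mathscr L(L^2(\omega_p),L^2(\Omega_p))$. Next I would record \eqref{KAdjoint}: for $p \in L^2(\Omega_p)$ and $q \in L^2(\omega_p)$, the function $(x_1,x_2,x_3)\mapsto x_3\,p(\mathbf x)\,q(x_1,x_2)$ is integrable on $\Omega_p$ (Cauchy–Schwarz in $x_3$ together with $q\in L^2(\omega_p)$), so Fubini's theorem permits interchanging the $x_3$-integration with the $\omega_p$-integration, yielding $(\mathcal K p, q)_{\omega_p} = (p, \tilde{\mathcal K} q)_{\Omega_p}$. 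This is precisely the identity characterizing the adjoint, and since $\tilde{\mathcal K}$ is bounded, uniqueness of the adjoint forces $\mathcal K^* = \tilde{\mathcal K}$.

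There is no real obstacle here; the statement is an immediate consequence of \eqref{KAdjoint} and Proposition \ref{Kbounded}, and the only point worth a line of justification is the applicability of Fubini used to establish \eqref{KAdjoint} in the first place.
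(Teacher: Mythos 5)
Your proposal is correct and follows the paper's own route exactly: the paper derives the corollary as an immediate consequence of \eqref{KAdjoint} together with Proposition \ref{Kbounded} (applied at $s=0$), which is precisely the boundedness-plus-adjoint-identity argument you give. Your extra remark justifying \eqref{KAdjoint} via Fubini is a harmless elaboration of a step the paper takes for granted.
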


	Now, for each $t \in [0,T]$, let $A(t) : H^{0,0,1}(\Omega_p) \to H^{0,0,1}(\Omega_p)' $ be defined through the bilinear form
    \[
        A(p,q;t) = (k_p \partial_3 p, \partial_3 q)_{\Omega_p},\quad \forall\, p,q \in H^{0,0,1}(\Omega_p).
    \]
Thus we can identify a realization of $A(t)$  for each $p \in H^{0,0,1}(\Omega_p)$, $$p \mapsto A(t)p\equiv A(p,[\cdot];t) \in H^{0,0,1}(\Omega_p)'.$$
Note that when Assumption \ref{KAssumption} is in force, $A(t)$ does not degenerate since $0<k_* \le k_p$. 

When $k$ is smooth in space, we can identify $A(t)$ with the unbounded operator having  $$\mathcal D(A(t)) \equiv \Big\{p \in H^{0,0,2}(\Omega_p)~:~\partial_3p=0~\text{ for } x_3=\pm h\Big\}$$ and differential action 
$$        A(t)p = -\partial_3 [k_p(\vec{x},t)\partial_3 p],\quad \forall\, p \in \mathcal D(A(t)).$$ In this case
$A(t) : \mathcal D(A(t))\subset L^2(\Omega_p) \to L^2(\Omega_p)$ is self-adjoint for each $t \in [0,T]$.

The following proposition is immediate.
\begin{proposition}
The operator $A(t): V \to V'$, interpreted as above, is monotone in the sense of  Section \ref{ext}.
\end{proposition}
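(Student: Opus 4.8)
The plan is to unwind the definition of the realization $A(t)\in\mathscr{L}(V,V')$ and reduce the claim to the pointwise nonnegativity of the integrand in the defining bilinear form. Recall from Section \ref{ext} that $A(t)$ is monotone on $V$ precisely when $\langle A(t)p,p\rangle \ge 0$ for every $p\in V$ and every $t\in[0,T]$, where $V=H^{0,0,1}(\Omega_p)$. The only subtlety worth flagging is that $A(t)$ here acts between $V$ and $V'$ rather than as the unbounded $L^2(\Omega_p)$ realization, so the pairing $\langle A(t)p,p\rangle$ must be read through the bilinear form $A(\cdot,\cdot;t)$; once that identification is made, the argument is a one-line computation.

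Concretely, I would fix $t\in[0,T]$ and $p\in V$, and use the identification $A(t)p = A(p,[\cdot];t)$ to write
\[
\langle A(t)p,p\rangle_{V'\times V} = A(p,p;t) = (k_p(\cdot,t)\,\partial_3 p,\partial_3 p)_{\Omega_p} = \int_{\Omega_p} k_p(\mathbf x,t)\,|\partial_3 p(\mathbf x)|^2\,d\mathbf x .
\]
This quantity is well-defined and finite since $k_p\in L^\infty(\Omega_p\times(0,T))$ by Assumption \ref{KAssumption} and $\partial_3 p\in L^2(\Omega_p)$ by definition of $V$. Invoking Assumption \ref{KAssumption} once more, $k_p(\mathbf x,t)\ge k_*>0$ for a.e.\ $\mathbf x\in\Omega_p$ and all $t\in[0,T]$, so the integrand is nonnegative a.e.; hence the integral is $\ge 0$ (in fact $\ge k_*\norm{\partial_3 p}_{0,\Omega_p}^2\ge 0$). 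Since $p$ and $t$ were arbitrary, monotonicity follows.

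There is no genuine obstacle here: the statement is an immediate consequence of the definition of the bilinear form together with the sign of $k_p$. I would only add the remark that monotonicity alone requires merely $k_p\ge 0$, whereas the strict lower bound $k_*>0$ from Assumption \ref{KAssumption} is what will later upgrade this estimate to the positivity/coercivity hypothesis needed to apply Theorem \ref{ImplicitExistence} to the pressure equation.
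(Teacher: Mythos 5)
Your proof is correct and is exactly the computation the paper has in mind — the paper states the proposition as ``immediate'' and records the same identity $A(p,p;t)=\|\sqrt{k_p}\,\partial_3 p\|_{\Omega_p}^2\ge k_*\|\partial_3 p\|_{\Omega_p}^2$ later in the proof of Theorem \ref{main2} (see \eqref{ANorm}). Your closing remark correctly separates monotonicity (needing only $k_p\ge 0$) from the coercivity used in \eqref{QSCoercive}.
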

   
       We similarly define the hinged biharmonic operator $\mathcal{E} : L^2(\omega_p) \to L^2(\omega_p)$ with domain $\mathcal D(\cE) \equiv \{ w \in H^4(\omega_p)\cap H_0^1(\omega_p) ~:~\Delta w\big|_{\Gamma_c}=0\}$ and action $$\mathcal{E}w = \Delta^2 w ~\text{for} ~w \in \mathcal{D}(\cE).$$ The unbounded operator above is strictly positive, self-adjoint, with compact inverse \cite{ciarletbook}. As such, the  positive square root $\cE^{1/2}:W \to L^2(\Omega)$ is defined and self-adjoint \cite{ciarletbook,redbook}. Indeed, denoting the standard Dirichlet Laplacian $[-\Delta_D] : H^2(\omega_p)\cap H_0^1(\omega_p)\to L^2(\omega_p)$, we can identify $\cE^{1/2}=-\Delta_D$ as operators (thus identifying $\mathcal D(\cE^{1/2}) = W$).
      As above, we can associate $\cE$ to the bilinear form
    \[
        e(w,u) = (\Delta w, \Delta u)_{\omega_p},\quad \forall\, w,u \in W,
    \]
    and we can give an interpretation to $\cE: W \to W'$ through the bilinear form, and it is an isomorphism in this context. With $\omega_p$ a rectangle, and hinged boundary conditions in force, full elliptic regularity for the biharmonic operator $\cE$ is available \cite{redbook,ciarlet2}. Thus, $\cE: \mathcal D(\cE) \to L^2(\omega_p)$ is an isomorphism as well. 
    
We can similarly interpret $[-\Delta_D]: H_0^1(\omega_p) \to H^{-1}(\omega_p)$ through the associated standard bilinear form. Moreover, by the density of the continuous inclusion ~$H^{2}(\omega_p)\cap H_0^1(\omega_p)  \hookrightarrow H_0^1(\omega_p)$, we have that $$H^{-1}(\omega_p) \hookrightarrow [H^2(\omega_p)\cap H_0^1(\omega_p)]'=[\mathcal D(\Delta_D)]'$$ as another dense, continuous inclusion \cite{ciarletbook}.

	\subsubsection{Pressure-to-Laplacian Operator}

	The key insight in studying the quasistatic 3D Biot system is the introduction of a ``pressure-to-divergence" operator \cite{frenchpaper,show1,cao2,bw}, allowing one to rewrite the system as a single implicit equation. Here, we follow suit and introduce an operator to reduce the plate system to an implicit evolution problem.   We define the composite operator $B : L^2(\Omega_p) \to L^2(\Omega_p)$ by the chain:
	    \begin{equation}\label{diagram}
        \xymatrixcolsep{3.5pc}\xymatrix{
            L^2(\Omega_p) \ar[r]^-{\mathcal{K}}  & L^2(\omega_p) \ar[r]^-{-\alpha\Delta_D} & 
            W' \ar[r]^-{(D\mathcal{E})^{-1}} & W \ar[r]^-{\Delta_D} & L^2(\omega_p) \ar[r]^-{-\alpha\tilde{\mathcal{K}~}}  & L^2(\Omega_p).
        }
    \end{equation}
	For the sake of brevity, we denote the coefficient $\beta = \alpha^2/D$. In this case we see that
    \[
        Bp= \beta\tilde{\mathcal{K}}\Delta_D\mathcal{E}^{-1}\Delta_D\mathcal{K}(p) = \beta\tilde{ \mathcal K} \mathcal K(p).
    \]
    Note that $\beta$ is a positive constant in accordance to the physical quantities defined in Section \ref{modelsec}.
    It is clear, then that $Bp = -\alpha \tilde{\mathcal K} \Delta_D w$ when $\mathcal Ew = -\frac{\alpha}{D}\Delta_D\mathcal K(p)$.
    
    \begin{remark}
    While it may seem excessive to define the operator in this way, we point out that, in general, the structure of the $B$ operator should follow the diagram in \eqref{diagram}. In this particular instance, our choice of boundary conditions/configuration allows a dramatic simplification which allows us to nicely illustrate the theory of implicit equations.
    \end{remark}
    We note that the action of $B$ (as above) extends immediately to $V$ (since $V$ encodes only $L^2(\Omega_p)$ information with respect to $(x_1,x_2)$). 
       We now consider the boundedness properties of the $B$ operator. 
    \begin{proposition}\label{prop1} $B \in \mathscr L(L^2(\Omega_p))$. 
    \end{proposition}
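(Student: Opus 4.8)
The plan is to lean entirely on the explicit reduction $Bp=\beta\tilde{\mathcal K}\mathcal K(p)$ that is already recorded just above the statement, together with the boundedness of $\mathcal K$ from Proposition~\ref{Kbounded}. Concretely, I would first justify why the chain in \eqref{diagram} collapses: by construction $\mathcal E^{1/2}=-\Delta_D$ with $\mathcal D(\mathcal E^{1/2})=W$, and the maps $-\Delta_D:W\to L^2(\omega_p)$, $\mathcal E:W\to W'$ are isomorphisms, while $-\Delta_D:L^2(\omega_p)\to W'$ is an isomorphism as the dual of $-\Delta_D:W\to L^2(\omega_p)$ (using self-adjointness). Hence $\mathcal E^{-1}=(-\Delta_D)^{-1}(-\Delta_D)^{-1}$ on the appropriate spaces, and a one-line cancellation yields $\Delta_D\mathcal E^{-1}(-\Delta_D)=-\mathrm{Id}_{L^2(\omega_p)}$, so that $\Delta_D(D\mathcal E)^{-1}(-\alpha\Delta_D)=-\tfrac{\alpha}{D}\,\mathrm{Id}$ and therefore $Bp=\tfrac{\alpha^2}{D}\tilde{\mathcal K}\mathcal K(p)=\beta\tilde{\mathcal K}\mathcal K(p)$.

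With this in hand the boundedness is immediate. Proposition~\ref{Kbounded} with $s=0$ gives $\mathcal K\in\mathscr L(L^2(\Omega_p),L^2(\omega_p))$ with $\|\mathcal K\|\le\sqrt{2h^3/3}$, and its adjoint $\tilde{\mathcal K}=\mathcal K^*\in\mathscr L(L^2(\omega_p),L^2(\Omega_p))$ with the same norm bound. Composing, $\|Bp\|_{0,\Omega_p}\le\beta\,\|\tilde{\mathcal K}\|\,\|\mathcal K\|\,\|p\|_{0,\Omega_p}\le\tfrac{2\alpha^2h^3}{3D}\|p\|_{0,\Omega_p}$ for every $p\in L^2(\Omega_p)$, which is exactly the asserted membership $B\in\mathscr L(L^2(\Omega_p))$ (and in fact produces an explicit norm bound). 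I would also note that the extension of $B$ to $V$ claimed earlier needs nothing new, since $V$ adds only $x_3$-regularity and $B$ acts through $\mathcal K$, which is built from integration in $x_3$ alone.

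Alternatively, and to keep the role of the diagram \eqref{diagram} visible, one can avoid the collapse and simply observe that $B$ is the composition of five bounded maps: $\mathcal K:L^2(\Omega_p)\to L^2(\omega_p)$ bounded, $-\alpha\Delta_D:L^2(\omega_p)\to W'$ an isomorphism, $(D\mathcal E)^{-1}:W'\to W$ an isomorphism, $\Delta_D:W\to L^2(\omega_p)$ an isomorphism, and $-\alpha\tilde{\mathcal K}:L^2(\omega_p)\to L^2(\Omega_p)$ bounded; a composition of bounded operators is bounded. I do not expect any genuine obstacle — the content is bookkeeping on \eqref{diagram}. The only point deserving a word of care is verifying the mapping properties at each arrow, in particular that $-\Delta_D$ maps $L^2(\omega_p)$ boundedly into $W'$ and $\mathcal E:W\to W'$ is an isomorphism, both of which are recorded in Section~\ref{constOp}.
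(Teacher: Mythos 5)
Your argument is correct and is essentially the paper's: the paper's proof is the single observation that, once $Bp=\beta\tilde{\mathcal K}\mathcal K(p)$ is identified (via $\Delta_D\mathcal E^{-1}\Delta_D=\mathrm{Id}$, since $\mathcal E=[-\Delta_D]^2$), boundedness follows directly from Proposition~\ref{Kbounded}. Your write-up simply makes the collapse of the chain \eqref{diagram} and the resulting explicit norm bound $\tfrac{2\alpha^2h^3}{3D}$ explicit, which is a harmless elaboration of the same route.
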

        That $B \in \mathscr{L}(L^2(\Omega_p))$ follows directly from Proposition \ref{Kbounded}.  
        
     \begin{remark} In future considerations of smooth solutions, we can consider an auxiliary  space
    ~~$U \equiv \Big\{p \in L^2(\Omega_p)~:~\mathcal Kp \in \mathcal D(\Delta_D)\Big\}.$
    The action of $B$ extends to the space $U$ as follows: \vskip.1cm
    	    \begin{equation}\label{diagram*}
        \xymatrixcolsep{3.5pc}\xymatrix{
            U \ar[r]^-{\mathcal{K}}  & \mathcal D(\Delta_D) \ar[r]^-{-\alpha\Delta_D} & 
            L^2(\omega_p) \ar[r]^-{(D\mathcal{E})^{-1}} & \mathcal D(\mathcal E) \ar[r]^-{\Delta_D} & \mathcal D (\Delta_D) \ar[r]^-{-\alpha\tilde{\mathcal{K}}~}  & U.
        }
    \end{equation}
That $B \in \mathscr L(U)$ follows after identifying that $\Delta_D\mathcal{E}^{-1}\Delta_D \in \mathscr{L}(L^2(\omega_p))$ is the identity map ($\mathcal E =[-\Delta_D]^2$) on $H^s(\omega_p)$. Additionally, we note that if $w \in \mathcal D(\Delta_D)$ then $\gamma_0[w]=0$ and hence $x_3 w =0$ when restricted to the lateral boundary $\Gamma_c \times (-h,h)$ of $\Gamma_p$; thus $[\mathcal K \tilde{\mathcal K} w]\big|_{\Gamma_c}=0$. \end{remark}

    \begin{remark} We point out that the above fact, $\Delta_D\mathcal{E}^{-1}\Delta_D \in \mathscr{L}(L^2(\omega_p))$, is critical in the use of the abstract theory of weak solutions. For other possible configurations, this composition need not be the identity, however, it is clear that compatibility of domains (in the sense of elliptic theory) is paramount.\end{remark}
    
  We conclude with two important properties of the $B$ operator in the context of what is to follow.
\begin{proposition}\label{prop2} The operator
$B: L^2(\Omega_p) \to L^2(\Omega_p)$ is self-adjoint and monotone. Moreover, $c_p\mathbf I+B ~:L^2(\Omega_p) \to L^2(\Omega_p)$ is an isomorphism. 
\end{proposition}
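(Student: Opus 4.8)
The plan is to exploit the factorization $B = \beta\,\tilde{\mathcal K}\mathcal K$ derived above and the fact that $\tilde{\mathcal K} = \mathcal K^*$ as operators between $L^2(\Omega_p)$ and $L^2(\omega_p)$ (the Corollary following Proposition~\ref{Kbounded}). First I would record self-adjointness: for all $p,q\in L^2(\Omega_p)$,
\[
(Bp,q)_{\Omega_p} = \beta(\tilde{\mathcal K}\mathcal K p, q)_{\Omega_p} = \beta(\mathcal K p, \mathcal K q)_{\omega_p} = \beta(p, \tilde{\mathcal K}\mathcal K q)_{\Omega_p} = (p, Bq)_{\Omega_p},
\]
using \eqref{KAdjoint} twice; since $B\in\mathscr L(L^2(\Omega_p))$ by Proposition~\ref{prop1}, this gives self-adjointness. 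Monotonicity is the same computation with $q=p$: $(Bp,p)_{\Omega_p} = \beta\|\mathcal K p\|_{0,\omega_p}^2 \ge 0$ because $\beta = \alpha^2/D > 0$.

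The substantive claim is that $c_p\mathbf I + B$ is an isomorphism on $L^2(\Omega_p)$. Since $c_p > 0$ and $B$ is self-adjoint, bounded, and monotone (i.e. nonnegative), the operator $c_p\mathbf I + B$ is self-adjoint, bounded, and satisfies
\[
((c_p\mathbf I + B)p, p)_{\Omega_p} = c_p\|p\|_{0,\Omega_p}^2 + \beta\|\mathcal K p\|_{0,\omega_p}^2 \ge c_p\|p\|_{0,\Omega_p}^2,
\]
so it is bounded below (coercive). A bounded, self-adjoint, coercive operator on a Hilbert space is boundedly invertible: coercivity gives injectivity and closed range, self-adjointness then forces the range to be dense (its orthogonal complement is $\Ker(c_p\mathbf I+B)=\{0\}$), hence the range is all of $L^2(\Omega_p)$, and the open mapping theorem (or directly the coercivity bound) gives continuity of the inverse with $\|(c_p\mathbf I+B)^{-1}\|\le c_p^{-1}$. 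Equivalently one may simply invoke Lax--Milgram on the bounded coercive bilinear form $a(p,q) = c_p(p,q)_{\Omega_p} + \beta(\mathcal K p,\mathcal K q)_{\omega_p}$.

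I do not anticipate a genuine obstacle here; the only point requiring a moment's care is confirming that the factorization $Bp = \beta\tilde{\mathcal K}\mathcal K(p)$ is valid (that the middle composition $\Delta_D\mathcal E^{-1}\Delta_D$ collapses to the identity), but this has already been established in the discussion of \eqref{diagram} and the subsequent remarks, where $\mathcal E = [-\Delta_D]^2$ on the relevant spaces. Once that identity is in hand, everything reduces to the elementary Hilbert-space facts above, and the coercivity constant $c_p$ is exactly what later allows the abstract existence theorem (Theorem~\ref{ImplicitExistence}) to apply with $\cB = c_p\mathbf I + B$.
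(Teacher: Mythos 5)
Your proposal is correct and follows essentially the same route as the paper: the identical adjointness computation via \eqref{KAdjoint} for self-adjointness and monotonicity, and Lax--Milgram (which you mention explicitly as the alternative, and which is precisely what the paper invokes) applied to the coercive bilinear form $b(p,q)=([c_p\mathbf I+B]p,q)$ for the isomorphism. The extra spelled-out argument via injectivity, closed range, and density is a fine elaboration but adds nothing beyond the paper's one-line appeal to Lax--Milgram.
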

\begin{proof}
By the adjoint relationship between $\mathcal K$ and $\tilde{\mathcal K}$ on $L^2(\Omega_p)$, it is clear that for $p,q \in L^2(\Omega_p)$ we have
$$(Bp,q)_{\Omega_p}=(\beta\tilde{\mathcal K}\mathcal K p, q)_{\Omega_p} = \beta(\mathcal Kp, \mathcal Kq)_{\omega_p}=(p,\beta\tilde{\mathcal K}\mathcal Kq)_{\Omega_p},$$ as well as 
\begin{equation}\label{BNorm}(Bp,p)_{\Omega_p} = \beta ||\mathcal Kp||^2_{L^2(\omega_p)}\ge 0.\end{equation}
That $c_p\mathbf I+B$ is an isomorphism on $L^2(\Omega_p)$ follows immediately from the Lax-Milgram Theorem \cite{kesavan,ciarletbook} applied to to the bilinear form
~~$b(p,q)=\left([c_p\mathbf I+B]p,q\right)=\left(p,[c_p\mathbf I+B]q\right)$ on $L^2(\Omega_p)$.
\end{proof}
\subsection{Translation to Eliminate Momentum Source}\label{Translation}

    As was done in~\cite{bw}, we provide a translation of the the system (\ref{QSPlate*}) in order to consider a simplified problem with $f \equiv 0$. This will allow us to directly frame the system in the implicit setting (\ref{gensys}). We will perform our main analysis on the translated problem. Subsequently, we will refer to this section and translate back to the original system.
    Consider the abstract problem:
    \begin{equation}\label{QSTranslation}
    \begin{cases}
        D\mathcal{E}(w) = -\alpha \Delta_D \mathcal{K}(p)+f &\in L^2(0,T;W'),\\
        [c_pp - \alpha\tilde{\mathcal{K}}\Delta_Dw]_t - A(t)p = g &\in L^2(0,T;V'),\\
        [c_pp - \alpha\tilde{\mathcal{K}}\Delta_Dw](\vec{x},0) = d_0(\vec{x}) &\in L^2(\Omega_p).
    \end{cases}
    \end{equation}
    Since $f \in H^1(0,T;W')$ by the (critical) regularity hypothesis of Theorem \ref{main2}, we can simply define
    \[
        w_f(t) = D^{-1}\mathcal{E}^{-1}f(t) \in H^1(0,T;W).
    \]
 Then, considering the variable $u = w - w_f$, we note that $w$ solves \eqref{QSTranslation} if and only if $u$ solves
    \begin{equation}\label{QSTranslation2}
    \begin{cases}
        D\mathcal{E}(u) = -\alpha \Delta_D \mathcal{K}(p) &\in L^2(0,T;W'),\\
        [c_pp - \alpha\tilde{\mathcal{K}}\Delta_Du]_t - A(t)p = g + \alpha\tilde{\mathcal{K}}\Delta_Dw_{f,t} &\in L^2(0,T;V'),\\
        [c_pp - \alpha\tilde{\mathcal{K}}\Delta_Du](\vec{x},0) = d_0(\vec{x}) + \alpha\tilde{\mathcal{K}}\Delta_D w_f(\vec{x},0) &\in L^2(\Omega_p).
    \end{cases}
    \end{equation}
    Thus, by re-scaling $g$ and $d_0$, we obtain an equivalent problem with $f \equiv 0$. Note that the temporal regularity of $f \in H^1(0,T;W')$ permits taking the time trace of $\Delta_Dw_f\big|_{t=0} \in L^2(\omega_p)$.
    
\section{Poro-elastic Plate Solutions: Proof of Theorem \ref{main2}}
First, we must place the dynamics presented in Section \ref{modelsec} into the abstract form of Section \ref{abstractsec}. After this, we invoke Theorem \ref{ImplicitExistence}, which yields a weak solution satisfying the stability estimate;  uniqueness will be obtained immediately after showing Assumption \ref{KAssumption2} provides the ``regularity" criterion from Theorem \ref{ImplicitUniqueness}. Finally, we show that the weak solution in the sense of Definition \ref{weaksol} can be recovered from the abstract weak solution, yielding Theorem \ref{main2}. 

\begin{proof}
First, we note that the system in \eqref{QSPlate*} can be abstractly presented as \eqref{QSTranslation}, using the notation of Section \ref{Operators}. Under the hypothesis on the data $f \in H^1(0,T;W')$, we invoke the translation in Section \ref{Translation} and rescale the data $g \in L^2(0,T;V')$ and $d_0\in L^2(\Omega_p)$ to obtain an equivalent system with $f\equiv 0$.
Now, suppose the permeability function $k_p$ satisfies the hypotheses of Assumption \ref{KAssumption}. We will obtain the existence of a weak solution $(w,p) \in L^2(0,T;W) \times L^2(0,T;V)$ to (\ref{QSTranslation2}). That is, we interpret the problem through the bilinear forms given in Section \ref{Operators} and take the time derivative distributionally in $\mathscr{D}'(0,T)$ yielding an equivalent form  Definition \ref{weaksol}.

With the operators as defined in Section \ref{Operators}, we reformulate (\ref{QSTranslation}) as the equivalent implicit Cauchy problem: Find $p \in L^2(0,T;V)$ such that 
\begin{equation}\label{QSCauchy}
\begin{cases}
    [(c_p \vec{I} + B)p]_t + A(t)p = g &\in L^2(0,T; V'),\\
    [(c_p \vec{I} + B)p](0) = d_0, &\in L^2(\Omega_p).
\end{cases}
\end{equation}
Note that since $[c_p \vec{I} + B]$ is an isomorphism on $L^2(\Omega_p)$, specifying $d_0 \in L^2(\Omega_p)$ is equivalent to specifying $p(0) \in L^2(\Omega_p)$ (cf. Remark \ref{ICremark}). Moreover, there exist constants such that \begin{equation}\label{data} c||d_0||_{L^2(\Omega_p)} \le ||p(0)||_{L^2(\Omega_p)} \le C||d_0||_{L^2(\Omega_p)}.\end{equation}
Since $k_p$ satisfies the hypotheses of Assumption \ref{KAssumption}, namely $0 < k_\ast \leq k_p(\vec{x},t)$ for all $\vec{x} \in \Omega_p$ and $t \in [0,T]$, $A(t)$ is strictly monotone with 
\begin{equation}\label{ANorm}
    A(p,p;t) = \| \sqrt{k_p}\partial_3 p \|_{\Omega_p}^2 \geq k_\ast\|\partial_3 p\|_{\Omega_p}^2
\end{equation}
for every $p \in V$, $t \in [0,T]$. 

From Propositions \ref{prop1} and \ref{prop2}, we have that $B \in \mathscr L(L^2(\Omega_p))$ is monotone and self-adjoint.
Moreover, (\ref{BNorm}) and (\ref{ANorm}) yield that for all $p \in V$ and $t \in [0,T]$
\begin{align}\label{QSCoercive}
    ([c_p\mathbf I + B]p,p) + 2A(p,p;t) &\geq c_p\|p\|_{\Omega_p}^2 + \beta\|\mathcal{K}(p)\|_{\omega_p}^2 + 2k_\ast\|\partial_3p\|_{\Omega_p} \nonumber\\
    &\geq \min(c_p,k_\ast)\|p\|_V^2.
\end{align}
Therefore, we are in a position to invoke Theorem \ref{ImplicitExistence}, which guarantees the existences of a weak solution $p \in L^2(0,T;V)$ to \ref{QSCauchy} satisfying the stability estimate
\begin{equation}\label{preest}    \|p\|_{L^2(0,T;V)}^2 \leq C(c_p,k_\ast)\left[\|g\|^2_{L^2(0,T;V')} + (d_0,p(0))\right].\end{equation}

To show that this weak solution is indeed unique, we will further require that permeability $k_p$ satisfies Assumption \ref{KAssumption2}. That is that $k_p(\vec{x},\cdot)$ is absolutely continuous in $t$ for a.e. $\mathbf x \in \Omega_p$, and $|\partial_t k_p(\vec{x},t)| \leq K(t)$ with $K \in L^1(0,T)$. Then, straightforwardly,
\begin{align*}
    \left| \frac{d}{dt}A(p,q;t) \right| 
    \leq  \int_{\Omega_p} |\partial_tk_p(\vec{x},t)|\,|\partial_3p|\,|\partial_3q|\,d\vec{x} 
    \leq K(t)\|p\|_V\|q\|_V,\quad \forall\, p,q \in V, ~ \forall\,t \in [0,T].
\end{align*}
Thus, $\big\{A(t): V\to V' : t \in [0,T]\big\}$ is a regular family. As we have noted in Section \ref{constOp}, $A(t)$ is self-adjoint on $L^2(\Omega_p)$ (also in the $V \to V'$ sense \cite{showmono}). Thus, Theorem \ref{ImplicitUniqueness} holds, and $p \in L^2(0,T;V)$ obtained in the previous step is indeed the unique weak solution to the Cauchy problem \eqref{QSCauchy}.  

Finally, we must connect our weak solution of \eqref{QSCauchy} back to a weak solution in the sense of Definition \ref{weaksol}.
With the unique weak solution $p \in L^2(0,T;V)$ to the problem \eqref{QSCauchy}, we have 
\[
    -\frac{\alpha}{D}\Delta_D\mathcal{K}(p) \in L^2(0,T;V').
\]
Then by the elliptic solver $\mathcal{E}^{-1} : W' \to W$, we obtain a unique weak solution $w \in L^2(0,T;W)$ to the  problem $\mathcal Ew=-\alpha \Delta_D\mathcal K p$. We then make the identification that $Bp=-\alpha\tilde{\mathcal K}\Delta_Dw$. Hence, a weak solution \eqref{QSCauchy} yields a (distributional-in-time) solution to \eqref{QSTranslation2}.
Now, we undo the translation of the variable $w \mapsto w - w_f$, in view of Section \ref{Translation}. This yields a unique solution to \eqref{QSTranslation}  which we also denote by $(w,p) \in L^2(0,T;W\times V)$. Connecting the obtained solution of the abstract equation \eqref{QSTranslation} back to Definition \eqref{weaksol} follows immediately from the equivalence of Definition \ref{weakimplicitsol} and \eqref{ImplicitCauchy}, and identifying the operators $\cE, \mathcal K,$ and $\tilde{\mathcal K}$ from Section \ref{Operators}.  To obtain the stability estimate as presented in Theorem \ref{main2}, we begin with the one obtained for $p$ in the previous step \eqref{preest} and undo the translation to obtain:
{\small \begin{equation} \label{stab2}   \|p\|_{L^2(0,T;V)}^2 \leq C(c_p,k_\ast)\left[\|g\|^2_{L^2(0,T;V')}+\|\alpha\tilde{\mathcal{K}}\Delta_D w_f\|^2_{L^2(0,T;V')} + (d_0,p(0))+\left(\alpha\tilde{\mathcal{K}}\Delta_D w_f(0),p(0)\right)\right]. \end{equation}}
We then collect the following inequalities for the weak solution\footnote{allowing $C$ to be a general constant that may change from line to line}:
{\small \begin{align*}
||w||_{W} \le& ~C\left(\frac{\alpha}{D}\right)\left|\left|\mathcal K p \right|\right|_{L^2(\omega_p)}+||\Delta_D^{-1}f||_{L^2(\omega_p)}
\le ~C\left[||p||_{L^2(\Omega_p)}^2+||f||_{W'}\right], \\
||p(0)||_{L^2(\Omega_p)} \le&~ C||[c_0\mathbf I+B]p(0)||_{L^2(\Omega_p)} \le C||d_0||_{L^2(\Omega_p)},\\
||\alpha\tilde{\mathcal{K}}\Delta_D w_f||_{V'} \le & ~\frac{\alpha}{D}||\tilde{\mathcal K}\Delta_D\mathcal E^{-1}f||_{V'} \le C||\tilde{\mathcal K}\Delta_D^{-1}f||_{L^2(\Omega_p)} \le C||\Delta_D^{-1}f||_{L^2(\omega_p)} \le C||f||_{W'},\\
||F\big|_{t=0}||_{W'} \le&~ C ||F||_{H^1(0,T;W')}.
\end{align*}}
Using these, the stability estimate for $p$ in \eqref{stab2} becomes the estimate in \eqref{stability**} of Theorem \ref{main2}.\end{proof}
	
\section{Future Work and Open Problems}\label{future}

	\subsection{Incompressible Constituents} As in the analysis of the 3D Biot dynamics, for instance in \cite{cao2,bw,bcmw}, the presence of compressibility $c_p>0$ has a clear impact on analytical approaches for the Biot plate. In the approach taken here, when $c_p=0$, we lose control of the $L^2(\Omega_p)$ norm in the static analysis, and hence coercivity of the reduced problem on $V$ (see \eqref{QSCoercive}). Moreover, the incompressible ``fluid content operator" $B=0\mathbf I +B$ is  non-injective on $L^2(\Omega_p)$, disqualifying many steps in the above analysis. 
	
	On the other hand, the recent work in \cite{bw} utilizes a singular limit approach to obtain existence in the case of vanishing compressibility.  For such an approach to obtain here, a variety of adjustments should be made to the functional setting (e.g., incorporating the lateral boundary conditions), making this issue not unrelated to the next  point in Section \ref{configs}. If existence can be obtained, a recent mollification argument presented in \cite{borisnew} for the 3D incompressible Biot system  holds promise to obtain energy estimates for the  incompressible case {\em without assuming} that the permeability $k_p(t)$ is {\em regular} (that is, without bounds on the first derivative). Hence, one might obtain the analogous result for well-posedness of weak solutions with $c_p=0$ and only require Assumption \ref{KAssumption}, dispensing with Assumption \ref{KAssumption2}, to obtain both continuous dependence and uniqueness. 
		
		\subsection{Other Configurations of Interest} \label{configs}
	In the analysis herein, it is critical to have comparability of the realization of $\Delta$, applied to the pressure moment, and that of $\Delta^2$, associated to elasticity, to make use of the abstract setup for weak solutions in Section \ref{abstractsec}. In general, when taken with boundary conditions reflecting different physical configurations, these operators may be defined on non-comparable domains. Any configuration other than the one considered  here  will require a substantial reworking of the treatment of the $B$ operator, and the general abstract setup. In a subsequent work, we aim to produce the operator-theoretic framework and associated analysis for the Biot plate problem taken with clamped plate conditions, along with other approaches to conditions for conditions on the lateral sides.
	
		\subsection{Nonlinearities} As motivated by biological applications (see, e.g., \cite{bgsw,bcmw}), it is quite natural to consider the permeability function with the dependence $k=k({\zeta})$. With the constitutive law in place here, we would then have $k_p=k_p(c_pp-\alpha x_3\Delta w)$, rendering the problem quasilinear. While it might seem natural to utilize a fixed point scheme in the context of our time-dependent results here, even in the 3D case this is technically challenging \cite{bw,borisnew}. On the other hand, there is a critical difference between the 3D Biot model and the plate here, the primary difference occuring in the elliptic degeneracy associated to the pressure equation; namely, there is no clear in-plane smoothing for $p$ or $w$.  This presents a substantial challenge in utilizing strategies such as those employed in \cite{cao2,bw,bgsw}.
		
		Alternatively, for plate problems arising in application, it is always of interest to consider large deflections. For traditional plate theory, this is the refinement of Kirchhoff-Love theory to that of von Karman's  \cite{ciarlet2,yellowbook}. Yet again, it will not be immediate to add semilinear elastic terms to the $w$-equation, since weak solutions for von Karman's equations are notoriously troublesome. Additionally, obtaining {\em smooth} solutions to the Biot plate problem is, at present, open (see the following Subsection \ref{smooth}). Moreover, if one retains the in-plane dynamics from the Biot plate model in \cite{mikelic}, it would be natural to consider the so called full von Karman equations of elasticity \cite{ciarlet2}; these equations employ coupled nonlinear terms in both the transverse dynamics as well as the in-plane dynamics. In either case, the incorporation of nonlinearity in the plate presents formidable challenges.

	\subsection{Implicit Degenerate Semigroup Theory}\label{smooth} In the analysis presented in the body of this work we have addressed only weak solutions, that, by their definition are indeed quite weak. No discussion has been made of strong solutions, or, relatedly, regularity theory for the quasi-static Biot plate system. (Though, some of the issues associated to lack of smoothing and boundary conditions for smooth solutions can be inferred from the semigroup discussion in Appendix B.) Yet, as we have seen in the various sections above, there is certainly motivation for a regularity theory to the Biot plate system. The most direct path would likely utilize the implicit semigroup approach presented (as applied to linear Biot systems) in \cite{show1}, with substantial generalizations in \cite{dbshow}. Though we must point out that, for technical reasons, these approaches are largely in the $V\to V'$ context and may not directly permit the requisite energy estimates for working with fixed point approaches to nonlinear problems. Moreover, these implicit semigroup approaches do not admit time-dependent operators, and, as this is a focus here, would require adaptation to a ``evolution" operator framework \cite{pazy}. However, a theory of time-dependent evolution operators associated to implicit equations does not seem readily available at present.

\section{Appendices}

\section*{Appendix A: Proof of Theorems \ref{ImplicitExistence}--\ref{ImplicitUniqueness}}\label{appendixa}
Consider the setting of Section \ref{abstractsec}. 
The following proofs track those given in \cite[III.3]{showmono}.

\subsection{Proof of Theorem \ref{ImplicitExistence}}
Suppose that $\cB$ is self-adjoint and monotone on $H$. Additionally, suppose there exist constants $\lambda,c > 0$ such that
\[
    2\langle \cA(t)v,v\rangle + \lambda(\cB v,v) \geq c\|v\|_V^2,\quad \forall\, v \in V,~\forall\, t \in [0,T].
\]
To prove existence of a weak solution to the implicit Cauchy problem, we will need the following extension of Lax-Milgram due to Lions, as well as a subsequent corollary. 
\begin{theorem}[Lions]\label{Lions}
    Let $(\BA, \|\cdot\|_\BA)$ be a Hilbert space and $(\BB, \|\cdot\|_\BB)$ be a normed linear space. If $a : \BA \times \BB \to \mathbb{R}$ is a bilinear form such that $a(\cdot,\phi) \in \BA'$ for every $\phi \in \BB$, then TFAE:
    \begin{itemize}
        \item $\displaystyle{\inf_{\|\phi\|_\BB = 1} \sup_{\|u\|_{\BA} \leq 1} |a(u, \phi)| \geq c > 0}$,
        
        \item for each $F \in \BB'$, there exists a $u \in \BA$ such that:~ $a(u,\phi) = F(\phi)$ for all $\phi \in \BB$.
    \end{itemize}
\end{theorem}
\begin{corollary}\label{LionsCor}
    If $\BB$ is continuously embedded in $\BA$ and $a$ is $\BB$-elliptic, then Theorem \ref{Lions} holds. 
\end{corollary}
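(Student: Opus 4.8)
The plan is simply to reduce to Theorem \ref{Lions} by verifying its inf-sup hypothesis directly from $\BB$-ellipticity together with continuity of the inclusion $\BB\hookrightarrow\BA$. Write $M>0$ for the norm of the embedding, so that $\|\phi\|_\BA\le M\|\phi\|_\BB$ for all $\phi\in\BB$, and let $c>0$ be the ellipticity constant, i.e. $a(\phi,\phi)\ge c\|\phi\|_\BB^2$ for every $\phi\in\BB$.

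First I would fix an arbitrary $\phi\in\BB$ with $\|\phi\|_\BB=1$. Since the inclusion $\BB\hookrightarrow\BA$ is continuous (hence injective), $\phi$ is a nonzero element of $\BA$ with $0<\|\phi\|_\BA\le M$. Using the admissible competitor $u=\phi/\|\phi\|_\BA\in\BA$, which satisfies $\|u\|_\BA=1\le 1$, bilinearity of $a$ in its first argument gives
\[
\sup_{\|u\|_\BA\le 1}|a(u,\phi)| \;\ge\; \frac{|a(\phi,\phi)|}{\|\phi\|_\BA} \;\ge\; \frac{c\|\phi\|_\BB^2}{\|\phi\|_\BA} \;=\; \frac{c}{\|\phi\|_\BA} \;\ge\; \frac{c}{M}.
\]
Passing to the supremum is legitimate precisely because $a(\cdot,\phi)\in\BA'$, so $\sup_{\|u\|_\BA\le 1}|a(u,\phi)|$ is a genuine (finite) dual norm and the normalized element $u$ is a valid test function. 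Since $\phi$ with $\|\phi\|_\BB=1$ was arbitrary, taking the infimum yields $\inf_{\|\phi\|_\BB=1}\sup_{\|u\|_\BA\le 1}|a(u,\phi)|\ge c/M>0$, which is exactly the first bullet of Theorem \ref{Lions}. The theorem then produces, for each $F\in\BB'$, a solution $u\in\BA$ of $a(u,\phi)=F(\phi)$ for all $\phi\in\BB$, which is the conclusion of the corollary.

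There is essentially no hard step; the corollary is a routine specialization. The only points requiring a moment's care are (i) that continuity of $\BB\hookrightarrow\BA$ supplies the uniform constant $M$ and the injectivity needed so that $\|\phi\|_\BB=1$ forces $\phi\ne 0$ in $\BA$, and (ii) that the standing hypothesis $a(\cdot,\phi)\in\BA'$ is what makes the supremum over $\|u\|_\BA\le 1$ a finite quantity, allowing the competitor $u=\phi/\|\phi\|_\BA$ to be inserted. Granting these, the displayed chain of inequalities is immediate.
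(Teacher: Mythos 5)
Your proof is correct: it is the standard reduction of $\BB$-ellipticity plus the embedding constant $M$ to the inf-sup condition via the normalized test element $u=\phi/\|\phi\|_\BA$, and the chain $\sup_{\|u\|_\BA\le 1}|a(u,\phi)|\ge c/\|\phi\|_\BA\ge c/M$ is exactly what is needed. The paper itself states this corollary without proof (quoting \cite[p.109]{showmono}), and your argument is the same one used there, so nothing further is required.
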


\begin{proof}[Proof of Theorem \ref{ImplicitExistence}]
Set $\BA = L^2(0,T;V)$ with the usual norm and $\BB = \{\phi \in \BA ~:~ \phi' \in L^2(0,T;H), \phi(T) = 0\}$ with the norm given by $\|\phi\|_\BB^2 = \|\phi\|_\BA^2 + (\cB\phi(0),\phi(0))_H$. This norm is well-defined by the monotonicity of $\cB$. For $u \in \BA$ and $\phi \in \BB$, let
\begin{align*}
    a(u,\phi) &= \int_0^T\langle \cA(t)u(t), \phi(t)\rangle \,dt - \int_0^T (\cB u(t),\phi'(t))\, dt;~~~~~F(\phi) = \int_0^T \langle S(t),\phi(t)\rangle dt + (\cB u_0,\phi(0)).
\end{align*}
Then the weak formulation of the implicit, degenerate Cauchy problem \eqref{ImplicitCauchy} given in \eqref{weakweak} is equivalent to the problem: 
~~Find $u \in \BB$ such that $a(u,\phi) = F(\phi)$ for all $\phi \in \BB$.

Note that $u \in \BA$ if and only if the exponential shift $v$ of $u$ is. That is, $u \in \BA$ if and only if $v \in \BA$ with $v(t) \equiv e^{-\lambda t}u(t)$ for any $\lambda \in \mathbb{R}$. Then $u$ solves the implicit Cauchy problem (\ref{ImplicitCauchy}) exactly when $v$ is the solution of the problem: ~~Find $v \in \BA$ such that 
\[
    \begin{cases}
        \frac{d}{dt}[\cB v(t)] + [\lambda I + \cA(t)]v(t)= e^{-\lambda t}S(t),\\
        \cB v(0) = \cB u_0.
    \end{cases}
\]
So choose $\lambda$, so that by the exponential shift, the estimate
\[
    2\langle \cA(t)v,v\rangle \geq c\|v\|_V^2,\quad \forall v\in V, ~ t\in [0,T]
\]
is equivalent to the supposed estimate. Since, by the regularity of $\phi \in \BB$, we have 
$$\frac{d}{dt}(\cB\phi(t),\phi(t)) = (\cB\phi'(t),\phi(t)) + (\cB\phi(t),\phi'(t)),$$ it follows that
\begin{align*}
    2a(\phi,\phi) = & \int_0^T 2\langle \cA(t)u(t),\phi(t)\rangle\, dt -\int_0^T (\cB \phi(t),\phi'(t))\,dt\\
    &+ \int_0^T (\cB\phi'(t),\phi(t))_H\,dt - \int_0^T \frac{d}{dt}(\cB\phi(t),\phi(t))\,dt,\quad \forall\, \phi \in \BB.
\end{align*}
Since $\cB$ is assumed self-adjoint and $\phi \in \BB$ (so $\phi(T) = 0$), this simplifies to
\[
    2a(\phi,\phi) \geq c\|\phi\|_\BA^2 + (\cB\phi(0),\phi(0)) \geq \min(1,c) \|\phi\|_\BB^2,\quad \forall\, \phi \in \BB.
\]
That is, the bilinear form $a(\cdot,\cdot)$ is $\BB$-elliptic. Since we also have by the Aubin-Lions Lemma \cite[Propositions 1.2 and 1.3]{showmono} that $\BB \hookrightarrow \BA$ continuously, it follows from Corollary \ref{LionsCor} and Theorem \ref{Lions} that there exists weak solution $u \in L^2(0,T;V)$ to the Cauchy problem described in (\ref{weakweak}). 
\end{proof}
\subsection{Proof of Theorem \ref{ImplicitUniqueness}}
In addition to the assumptions taken in the previous section, let us also assume that $\{\cA(t): V \to V~:~[0,T]\}$ is a regular family of self-adjoint operators. Again, we have assumed there are $\lambda,c >0$ such that
\[
    \langle \cA(t)v, v\rangle + \lambda(\cB v,v) \geq c\|v\|^2_V, \quad \forall\, v \in V, ~ \forall\, t \in [0,T].
\]
As in the previous proof, we invoke the exponential shift, so the above is equivalent to
\[
    \langle \cA(t)v,v\rangle \geq c\|v\|_V^2, \quad \forall\, v \in V, ~ \forall\, t \in [0,T].
\]
Let $a$, $\BA$, and $\BB$ be the bilinear form and Sobolev spaces defined in the preceding proof. We will need the result below from \cite{showmono}.
\begin{proposition}\label{RegularProp}
    If $\{\cA(t)\}$ is a regular family of operators in $\mathscr{L}(V,V')$, then for every $u,v \in H^1(0,T;V)$, $\langle \cA(\cdot)u(\cdot),v(\cdot)\rangle $ is absolutely continuous on $[0,T]$ and 
    \[
        \frac{d}{dt} \langle \cA(t)u(t),v(t) \rangle= \langle \cA'(t)u(t),v(t)\rangle + \langle \cA(t)u'(t), v(t)\rangle + \langle \cA(t)u(t),v'(t)\rangle, \quad \text{a.e. }\, t \in [0,T].
    \]
\end{proposition}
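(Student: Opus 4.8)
The plan is to prove Proposition \ref{RegularProp} in three stages: (i) extract from the regularity hypothesis a measurable operator-valued ``derivative'' $\cA'(\cdot):[0,T]\to\mathscr L(V,V')$ with $\|\cA'(t)\|_{\mathscr L(V,V')}\le K(t)$ a.e.; (ii) verify the Leibniz identity for polynomial-in-time $V$-valued arguments; (iii) pass to the limit in $H^1(0,T;V)$ by density. \emph{Stage 1 (the operator $\cA'$).} For each fixed pair $u,v\in V$ the scalar function $g_{u,v}(t):=\langle\cA(t)u,v\rangle$ is absolutely continuous by hypothesis, hence differentiable a.e. with $g_{u,v}'\in L^1(0,T)$, $|g_{u,v}'(t)|\le K(t)\|u\|_V\|v\|_V$, and $g_{u,v}(t)=g_{u,v}(0)+\int_0^t g_{u,v}'(s)\,ds$. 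Taking suprema gives the uniform bound $\|\cA(t)\|_{\mathscr L(V,V')}\le M:=\|\cA(0)\|_{\mathscr L(V,V')}+\|K\|_{L^1(0,T)}$, which I use freely below. Fix a countable dense set $D\subset V$ (separability of $V$). Off the single null set $N$ obtained as the union of the exceptional sets over the countably many pairs in $D\times D$, the assignment $(u,v)\mapsto g_{u,v}'(t)$ is well-defined, $\mathbb Q$-bilinear on $D\times D$, and bounded by $K(t)\|u\|_V\|v\|_V$; for $t\notin N$ it therefore extends uniquely to a bounded bilinear form on $V\times V$ of norm at most $K(t)$, which I represent as $\cA'(t)\in\mathscr L(V,V')$ (and set $\cA'(t):=0$ on $N$). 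A routine approximation — choosing $u_n\to u$, $v_n\to v$ in $V$ with $u_n,v_n\in D$, using the bound $M$ on function values and dominated convergence (dominant a multiple of $K$) on the integral — upgrades the identity $\langle\cA(t)u,v\rangle=\langle\cA(0)u,v\rangle+\int_0^t\langle\cA'(s)u,v\rangle\,ds$ to all $u,v\in V$ and all $t\in[0,T]$; in particular $t\mapsto\langle\cA'(t)u,v\rangle$ is measurable for every $u,v$.

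\emph{Stage 2 (simple arguments).} Consider $u(t)=\sum_j\varphi_j(t)a_j$ and $v(t)=\sum_k\psi_k(t)b_k$, finite sums with $\varphi_j,\psi_k\in C^1([0,T])$ scalar and $a_j,b_k\in V$. Then
\[ \langle\cA(t)u(t),v(t)\rangle=\sum_{j,k}\varphi_j(t)\psi_k(t)\,\langle\cA(t)a_j,b_k\rangle \]
is a finite sum of products of a Lipschitz scalar function with an absolutely continuous scalar function (namely $\langle\cA(\cdot)a_j,b_k\rangle$, whose a.e. derivative is $\langle\cA'(\cdot)a_j,b_k\rangle$ by Stage 1), hence absolutely continuous. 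Applying the scalar product rule term by term and regrouping using $u'(t)=\sum_j\varphi_j'(t)a_j$, $v'(t)=\sum_k\psi_k'(t)b_k$ gives exactly
\[ \tfrac{d}{dt}\langle\cA(t)u(t),v(t)\rangle=\langle\cA'(t)u(t),v(t)\rangle+\langle\cA(t)u'(t),v(t)\rangle+\langle\cA(t)u(t),v'(t)\rangle\quad\text{a.e.} \]

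\emph{Stage 3 (density and limit).} Such simple functions are dense in $H^1(0,T;V)$, and $H^1(0,T;V)\hookrightarrow C([0,T];V)$. Given arbitrary $u,v\in H^1(0,T;V)$, choose simple $u_n\to u$, $v_n\to v$ in $H^1(0,T;V)$ (hence also in $C([0,T];V)$), integrate the Stage 2 identity over an arbitrary $[s,t]\subseteq[0,T]$, and let $n\to\infty$. The left side converges pointwise by the uniform bound $M$; on the right, $\int_s^t\langle\cA'(\tau)u_n(\tau),v_n(\tau)\rangle\,d\tau\to\int_s^t\langle\cA'(\tau)u(\tau),v(\tau)\rangle\,d\tau$ by dominated convergence with dominant $R^2K\in L^1(0,T)$ (where $R$ bounds the $C([0,T];V)$-norms of $u_n,v_n$), while $\int_s^t\langle\cA(\tau)u_n'(\tau),v_n(\tau)\rangle\,d\tau\to\int_s^t\langle\cA(\tau)u'(\tau),v(\tau)\rangle\,d\tau$ since $\|\cA(\tau)\|_{\mathscr L(V,V')}\le M$, $u_n'\to u'$ in $L^2(0,T;V)$, and $v_n\to v$ in $C([0,T];V)$ (and symmetrically for the third term). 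The limiting integrated identity has an $L^1(0,T)$ integrand, so $t\mapsto\langle\cA(t)u(t),v(t)\rangle$ is absolutely continuous with the claimed a.e. derivative.

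The substantive step is Stage 1: the exceptional null sets of the scalar functions $g_{u,v}$ a priori depend on $(u,v)$, and what lets one assemble a single operator-valued $\cA'(\cdot)$ is precisely the separability of $V$ together with the uniform pointwise bound by $K(t)$; absent separability one would instead have to work throughout with a purely weak/Gelfand notion of differentiability of $t\mapsto\cA(t)$. Stages 2 and 3 are routine, the only care needed being to pair the factors against $\cA$ and $\cA'$ in the $C([0,T];V)$ topology while using the $L^2(0,T;V)$ topology for the derivatives $u',v'$.
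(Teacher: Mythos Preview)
The paper does not actually prove this proposition; it is quoted from \cite{showmono} and used as a black box in the uniqueness argument. So there is no ``paper's own proof'' to compare against --- your three-stage argument supplies exactly the details the paper elects to cite.

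Your proof is correct. One cosmetic point in Stage~1: when you assert that $(u,v)\mapsto g_{u,v}'(t)$ is $\mathbb Q$-bilinear on $D\times D$, you implicitly need $D$ to be closed under $\mathbb Q$-linear combinations (so that $g_{u+w,v}=g_{u,v}+g_{w,v}$ etc.\ are identities between functions in your countable family, and the single null set $N$ already handles their derivatives). Replacing your countable dense set by its $\mathbb Q$-span --- still countable --- fixes this without further comment. Everything else (the uniform bound $M$ on $\|\cA(t)\|$, the dominated-convergence passages with dominant $K$, the density of $C^1([0,T])\otimes V$ in $H^1(0,T;V)$, and the pairing of $C([0,T];V)$-convergence for the functions against $L^2(0,T;V)$-convergence for their derivatives) is clean and standard.
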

\noindent Now, we suppose that $u$ is a weak solution to the Cauchy problem described in (\ref{weakweak}) with $u_0 = 0, F = 0$. Since $u$ is not necessarily in $\BB$, we construct a test function in the following way. Let $s \in (0,T)$ and define $v\in H^1(0,T;V)$ by $v(t) = -\int_t^su(\tau)\,d\tau$ for $t \in [0,s]$ and $v(t) = 0$ for $t \in [s,T]$. Then $v'(t) = u(t)$ for $t \in (0,T)$ and $v(t) = v'(t) = 0$ for $t \in (s,T]$. Hence, $v \in \BB$. It follows from Theorem \ref{Lions} that
\[
    a(u,v) = \int_0^s \langle \cA(t)v'(t), v(t)\rangle \,dt - \int_0^s (\cB u(t),u(t))\,dt = 0.
\]
Since the family $\{A(t)\}$ is regular, Proposition (\ref{RegularProp}) guarantees that
\begin{align*}
    -2a(u,v) = & \int_0^s 2(\cB u(t),u(t))\,dt - \int_0^s \langle \cA(t)v'(t),v(t)\rangle\,dt + \int_0^s \langle \cA'(t)v(t),v(t)\rangle\,dt\\
    & + \int_0^s \langle \cA(t)v(t), v'(t)\rangle \,dt - \int_0^s \frac{d}{dt} \langle \cA(t)v(t),v(t)\rangle.
\end{align*}
Since each $\cA(t)$ is self-adjoint and $v(s) = 0$, we have that 
\[
    \int_0^s 2(\cB u(t),u(t))\,dt + \int_0^s \langle \cA'(t)v(t), v(t)\rangle\,dt + \langle \cA(0)v(0),v(0)\rangle = 0.
\]
Now, set $U(t) = \int_0^t u(\tau)\,d\tau$ so that $U(s) = -v(0)$ and $U(t) - U(s) = v(t)$ for $t \in (0,s)$. By the assumed coercivity, and since $\cB$ is monotone,
\begin{align*}
    c\|U(s)\|_V^2 &\leq \langle \cA(0)U(s),U(s)\rangle + \int_0^s 2(\cB u(t),u(t))\,dt \leq -\int_0^s \langle \cA'(t)v(t),v(t)\rangle \,dt.
\end{align*}

Since the family $\{\cA(t)\}$ is regular, there is a $K \in L^1(0,T)$ such that
\[
    c\|U(s)\|_V^2 \leq \int_0^s K(t)\|v\|_V^2\,dt \leq 2\int_0^s K(t)\left(\|U(t)\|_V^2 + \|U(s)\|_V^2 \right)\,dt.
\]
We can choose $s_0$ sufficiently small that $k = 2\int_0^{s_0}K(t)\,dt < c$. Then
\[
    \|U(s)\|_V^2 \leq \frac{2}{c-k} \int_0^s K(t)\|U(t)\|_V^2\,dt,\quad \forall\, s \in [0,s_0].
\]
Hence by the Gr\"onwall inequality, $U(s) = 0$ for $s \in [0,s_0]$ and subsequently $u(t) = 0$ for $t \in (0,s_0)$. Since $K \in L^1(0,T)$, choose $s_0$ so that $\int_\tau^{\tau + s} K(t)\,dt < c/2$ for $\tau \in [0,\tau-s_0]$. Then, apply the preceding a finite number of times to obtain $u(t) = 0$ for $t \in [0,T]$. 

\section*{Appendix B: Inertial, Compressible Case}\label{appendixb}
In this appendix we address the inertial Biot plate \eqref{FDPlate}, specifically in the case $\rho_p,c_p>0$. We utilize the semigroup theory, motivated by classical references for coupled parabolic systems (such as that of thermoelasticity) as in \cite{thermo,redbook}. We have chosen to keep this appendix self-contained, owing to its thematic departure from the main body of the manuscript. In this appendix, we will frame the problem abstractly, largely utilizing the operators and spaces from Section \ref{Operators}. Moreover, for clarity of exposition and brevity, we consider the case $k_p=const>0$ (though we comment on the time-dependent case below). In this framework, we will demonstrate that the evolution operator associated to this parabolic-hyperbolic system generates a strongly continuous semigroup on the appropriate state space. With that result in hand, we can invoke the traditional procedure of obtaining weak solutions from mild solutions, identified as $C_0$ strong limits of strong solutions (that is, solutions for data in the domain of the generator); see \cite[pp.258--259]{pazy} and \cite[Section 2.4, pp.75--90]{yellowbook}. Finally, we will remark on the extension to the time-dependent case with $k_p=k_p(t)$ utilizing the notion of a stable family of generators \cite[Ch. 5.3]{pazy}. 

Consider the inertial Biot plate as before with $c_p,\rho_p>0$, after re-scaling:
\begin{equation}\label{FDPlate-B} \small
\begin{cases}
    w_t-v = 0 & \text{in } \omega_p\\
    v_t + {D} \Delta^2 w + {\alpha}_p \Delta \int_{-h}^h x_3p\,dx_3 = f &\text{in } \omega_p,\\
    \partial_tp - {\alpha}_p x_3\Delta v - \partial_3(k_p\partial_3 p) = g & \text{in } \Omega_p,\\
    w(x_1,x_2,0)=w_0(x_1,x_2),~~w_t(x_1,x_2,0)=w_1(x_1,x_2) & \text{in } \omega_p,\\
    p(\mathbf x,0)-\alpha x_3\Delta w_1(x_1,x_2)=d_0(\mathbf x) & \text{in } \Omega_p,\\
        w  = 0;~~D\Delta w+\alpha\int_{-h}^hx_3 p~dx_3=0 &\text{on } \Gamma_c,\\
        \partial_{\vec{n}} p = 0 &\text{on } \{x_3=h\} \cup \{x_3=-h\}.
    \end{cases}
\end{equation}
The primary result  will correspond to the case of constant permeability $k_p=const > 0$. Hence we replace the definition of the operator $A(t)$ with $A=-k\partial_3^2$ defined on 
\[
    \mathcal D(A) = \set{u \in H^{0,0,2}(\Omega_p) ~: \gamma_1[u] = 0 \text{ on } \{x_3 = \pm h\}}.
\]

We utilize the spaces in Section \ref{spaces}, and recall
\begin{align}
\mathcal D(\mathcal E) = & \{w \in H^4(\omega_p)\cap H_0^1(\omega_p)~:~\gamma_0[\Delta w]=0\}=\mathcal D(\Delta_D^2),\\
\mathcal D(\mathcal E^{1/2})  = & ~W = H^2(\omega_p) \cap H_0^1(\omega_p);~~~~~V=~H^{0,0,1}(\Omega_p).
\end{align}

\subsection{Semigroup Generation}	
\begin{theorem}
    Consider the inertial plate system (\ref{FDPlate-B}) with $f=g=0$, in $\mathbf y =[w,v,p]$, posed on 
    \[
        X = \mathcal{D}(\mathcal{E}^{1/2}) \times L^2(\omega_p) \times L^2(\Omega_p)
    \]
    endowed with the norm $\norm{\vec{y}}_{{X}}^2 = \norm{\mathcal{E}^{1/2} w}_{0,\omega_p}^2 + \norm{v}_{0,\omega_p}^2 + \norm{p}_{0,\Omega_p}^2$. Define the matrix operator 
    \[
        \mathbf{A}: \mathcal{D}(\mathbf{A}) = \Big\{[w,v,p] \in  W \times W \times  \mathcal D(A)~:~[\Delta w+\alpha\mathcal Kp] \in \mathcal D(\Delta_D)\Big\}
    \]
    with differential action
    \[
        \mathbf A\vec{y}  
        = \begin{pmatrix} v\\ -\mathcal E^{1/2}\left[\mathcal{E}^{1/2} w + \alpha \mathcal K p\right]\\[.2cm] \alpha \tilde{\mathcal K}\Delta_D v - A p \end{pmatrix},\quad \forall\, \vec{y} \in \mathcal{D}(\mathbf A).
    \]
    Then $\mathbf A$ is the generator of a strongly continuous semigroup $\{e^{\mathcal{\mathbf A}t}~:~ t \geq 0\}$ of contractions on X corresponding to the Cauchy problem
    $$\dot{\mathbf y} = \mathbf A\mathbf y;~~ \mathbf y(0)=[w_0,w_1,p] \in \mathcal D(\mathbf A).$$
\end{theorem}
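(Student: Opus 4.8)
The plan is to apply the classical Lumer--Phillips theorem: I will show that $\mathbf A$ is dissipative on $X$ and that $\lambda I - \mathbf A$ is surjective for some (hence all) $\lambda>0$. Dissipativity is the natural starting point because the norm on $X$ has been chosen precisely to match the physical energy of the system. First I would compute $(\mathbf A\mathbf y,\mathbf y)_X$ for $\mathbf y \in \mathcal D(\mathbf A)$: the $w$--$v$ block contributes the cross terms $(\mathcal E^{1/2}v,\mathcal E^{1/2}w)_{\omega_p}$ and $-(\mathcal E^{1/2}[\mathcal E^{1/2}w+\alpha\mathcal Kp],v)_{\omega_p}$, which after using self-adjointness of $\mathcal E^{1/2}$ collapse to $-\alpha(\mathcal E^{1/2}\mathcal E^{1/2}w\ \text{--type term})$; the $p$ block contributes $(\alpha\tilde{\mathcal K}\Delta_D v,p)_{\Omega_p} - (Ap,p)_{\Omega_p}$. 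The elastic/pressure coupling terms should cancel: using the adjoint relation \eqref{KAdjoint}, $\mathcal E^{1/2}=-\Delta_D$, and the identification $\tilde{\mathcal K}=\mathcal K^*$, the terms $-\alpha(\Delta_D w \cdot\text{stuff})$ and $+\alpha(\tilde{\mathcal K}\Delta_D v,p)$ telescope against each other, leaving only $-(Ap,p)_{\Omega_p} = -k\|\partial_3 p\|_{0,\Omega_p}^2 \le 0$. This yields $\mathrm{Re}(\mathbf A\mathbf y,\mathbf y)_X \le 0$, i.e. dissipativity. I would need to be a little careful about which terms require $[\Delta w + \alpha\mathcal Kp]\in\mathcal D(\Delta_D)$ in order to move $\Delta_D$ across inner products without boundary residue---this is exactly the role of that domain condition.

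Next I would establish maximality, i.e. $\mathrm{Ran}(\lambda I - \mathbf A) = X$ for $\lambda > 0$. Given $[f_1,f_2,f_3]\in X$, I must solve $\lambda w - v = f_1$, $\lambda v + \mathcal E^{1/2}[\mathcal E^{1/2}w + \alpha\mathcal Kp] = f_2$, $\lambda p - \alpha\tilde{\mathcal K}\Delta_D v + Ap = f_3$. The first equation gives $v = \lambda w - f_1$; substituting, I reduce to a coupled elliptic system in $(w,p)$ on $W \times V$. I would set this up variationally: the bilinear form is $\lambda^2(w,z)_{\omega_p} + (\mathcal E^{1/2}w,\mathcal E^{1/2}z)_{\omega_p} + \alpha(\mathcal Kp,\mathcal E^{1/2}z\ \text{-type})_{\omega_p} + \text{(symmetric coupling)} + \lambda(p,q)_{\Omega_p} + \lambda^{-1}(\text{stuff involving }\alpha\tilde{\mathcal K}\Delta_D) + (A p,q)$ type terms. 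The cross terms should again be skew-symmetric after the same integration-by-parts bookkeeping, so the symmetric part of the form is coercive on $W\times V$ (controlling $\|\mathcal E^{1/2}w\|^2 + \lambda^2\|w\|^2 + \lambda\|p\|^2 + k\|\partial_3 p\|^2$), and Lax--Milgram gives a unique weak solution. Then I would run elliptic regularity: the $w$-equation together with $\mathcal Kp\in L^2$ and hinged boundary conditions yields (via full biharmonic regularity on the rectangle, already invoked in Section~\ref{constOp}) that $w\in W$ and $\Delta w + \alpha\mathcal Kp\in\mathcal D(\Delta_D)$; the $p$-equation, which is a 1D ODE in $x_3$ for a.e.\ $(x_1,x_2)$ with Neumann data, gives $p\in\mathcal D(A)$. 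Hence the weak solution lands in $\mathcal D(\mathbf A)$, establishing surjectivity. Density of $\mathcal D(\mathbf A)$ in $X$ is straightforward from the density of the smooth cores.

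The main obstacle I expect is not the coercivity estimate but the regularity/compatibility step in the surjectivity argument: verifying that the weak solution actually satisfies the domain condition $[\Delta w + \alpha\mathcal Kp]\in\mathcal D(\Delta_D)$, i.e.\ that $\Delta w + \alpha\mathcal Kp\in H^2\cap H_0^1$ with the hinged trace vanishing. This is where the anisotropic nature of the problem bites: $\mathcal Kp$ carries only $L^2(\Omega_p)$ regularity in $x_3$, so one only expects $\mathcal Kp\in L^2(\omega_p)$ a priori, and one must leverage the elliptic equation for $w$ to bootstrap $w$ (and hence the combination $\Delta w + \alpha\mathcal Kp$, not $w$ alone) into the right space---and simultaneously confirm the correct boundary condition is inherited from the variational formulation. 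The interplay between the plate boundary conditions on $\Gamma_c$ and the (in $x_3$ only) structure of the pressure operator has to be handled carefully, exactly as flagged in the remarks preceding Section~\ref{spaces}. Once maximal dissipativity is in hand, Lumer--Phillips gives the $C_0$-contraction semigroup and the stated Cauchy problem is well-posed for data in $\mathcal D(\mathbf A)$.
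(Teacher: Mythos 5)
Your proposal is correct and follows essentially the same route as the paper: dissipativity via the cancellation of the elastic--pressure coupling terms (using self-adjointness of $\mathcal E^{1/2}$ and the adjoint pair $\mathcal K$, $\tilde{\mathcal K}$), leaving $-(Ap,p)_{\Omega_p}\le 0$, followed by maximality via a Lax--Milgram argument for the reduced $(w,p)$ resolvent system on $W\times V$ and an elliptic-regularity/trace argument to recover $[\Delta w+\alpha\mathcal K p]\in\mathcal D(\Delta_D)$ and $p\in\mathcal D(A)$. The difficulty you flag---that the boundary condition is carried by the combination $D\Delta_D w+\alpha\mathcal K p$ rather than by $w$ alone, and must be extracted from the variational formulation---is exactly the step the paper handles by testing first with $C_0^\infty(\omega_p)$ and then using surjectivity of the trace map.
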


\begin{proof}
Note that 
{
\begin{align} (\mathbf A\mathbf y, \mathbf y)_X=&~ (\mathcal E^{1/2}v,\mathcal E^{1/2}w)_{\omega_p}-\big(\mathcal E^{1/2}[\mathcal E^{1/2}w+\alpha \mathcal Kp],v\big)_{\omega_p}+(\alpha\tilde{\mathcal K}\mathcal E^{1/2}v-Ap,p)_{\Omega_p} \\
=&~(\mathcal E^{1/2}v,\mathcal E^{1/2}w)_{\omega_p}-\big(\mathcal E^{1/2}w,\mathcal E^{1/2}v\big)_{\omega_p}-(\alpha\mathcal K p, \mathcal E^{1/2}v)_{\omega_p}+(\alpha\tilde{\mathcal K}\mathcal E^{1/2}v,p)_{\Omega_p}\\\nonumber
&-(Ap,p)_{\Omega_p} \\
=&-(k_p\partial_3p,\partial_3p)_{\Omega_p} \le 0,~ \quad \forall\, \vec{y} = [w,v,p] \in \mathcal D(\mathbf A).
\end{align}}
We utilized the self-adjointness of $\mathcal E^{1/2}$ on $L^2(\omega_p)$ and the adjoint relation between $\mathcal K$ and $\tilde{\mathcal K}$. 

To show that $\mathbf A$ is indeed $m$-dissipative, we consider for a given $\vec{F} = [f_1,f_2,f_3] \in X$ the resolvent system $(\vec{I} - \mathbf A)\vec{y} = \vec{F}$ for $\vec{y} = [w,v,p] \in \mathcal{D}(\mathbf A)$, i.e.,
    \begin{equation}\label{FDResolvent1}
    \begin{cases}
        w - v = f_1 \in W, \\
            v + \mathcal{E}^{1/2}[\mathcal E^{1/2}w +\alpha \mathcal Kp] = f_2 \in L^2(\omega_p), \\
            p -\alpha \tilde{\mathcal K}\mathcal E^{1/2}v + Ap = f_3 \in L^2(\Omega_p).
    \end{cases}
    \end{equation}
    By the first equation, $v = w - f_1$, whence
    \begin{equation} \label{FDResolvent2}
    \begin{cases}
         w +\mathcal{E}^{1/2}[\mathcal E^{1/2}w +\alpha \mathcal Kp]= f_2 + f_1 \in L^2(\omega_p), \\
      p -\alpha \tilde{\mathcal K}\mathcal E^{1/2}w + Ap = f_3 -\alpha \tilde{\mathcal K}\mathcal E^{1/2}f_1 \in L^2(\Omega_p).
    \end{cases}
    \end{equation}
    Note that these equations are self-consistent on $L^2(\omega_p)\times L^2(\Omega_p)$, since $f_1 \in W = \mathcal{D}(\mathcal E^{1/2})$, and 
 $p \in \mathcal D(A)$ and $[\mathcal E^{1/2}w+\alpha \mathcal Kp] \in  W$. Now,  for  (\ref{FDResolvent2}), we can generate a bilinear form for $\phi=[w,p]$ from the LHS:
    \begin{align}\label{FDBilinear}
        a(\phi,\psi) := &~D\int_{\omega_p} \Delta_D w \Delta_D u\,dx_1dx_2 + \int_{\omega_p} wu\,dx_1dx_2 + \int_{\omega_p} \alpha {\mathcal{K}}p [\Delta u] \,d\vec{x}\\
        &~- \int_{\omega_p}  \alpha  {\mathcal{K}}q [\Delta w] \,d\vec{x} + \int_{\Omega_p} pq\,d\vec{x} + \int_{\Omega_p} k_p \partial_3p\partial_3q\,d\vec{x}, \quad \forall\, \psi = [u,q] \in W \times V. \nonumber
    \end{align}
     Then we can state a weak formulation of the resolvent equation:~~ Find $\phi \in W \times V$ such that 
    \begin{equation}\label{FDweak}
        a(\phi,\psi) = \int_{\omega_p}(f_2u + f_1u)\,dx_1dx_2 + \int_{\Omega_p}(f_3 -  \alpha \tilde{\mathcal{K}}\Delta_D f_1)q\,d\vec{x}, \quad \forall\, \psi = [u,q] \in W \times V.
    \end{equation}
    
    Under the $||\Delta\cdot ||_{L^2(\omega_p)}$ norm on $W$ (equivalent thereon to the full $||\cdot||_{H^2(\omega_p)}$ norm), coercivity of the bilinear form $a(\cdot,\cdot)$ is evident:
    \begin{align*}
        a(\phi,\phi) &= D\|\Delta_D w\|_{0,\omega_p}^2 + \|w\|_{0,\omega_p}^2 + \|p\|_{0,\Omega_p}^2 + k_p\|\partial_3 p\|_{0,\Omega_p}^2\\
        &\geq D\norm{w}_{W}^2 + \min(1,k_p)\norm{p}_V^2 \quad \forall\, \phi = [w,p] \in W \times V.
    \end{align*}
    Now, let $\phi = (w,p), \psi = (u,q) \in W \times V$ be arbitrary. Then by continuity of $\mathcal K,\tilde{\mathcal K}$
    
   {\small \begin{align*}
        \int_{\Omega_p} \alpha  \tilde{\mathcal{K}}(p) \Delta_D u \,d\vec{x}
        &=  \alpha \sqrt{2h}\sup\{x_3 \in [-h,h]\}\norm{ p}_{0,\Omega_p}\norm{\Delta_D  u}_{0,\omega_p}\leq \sqrt{2}\alpha h^{3/2}\norm{p}_V\norm{ u}_{2,\omega_p},\\
        \int_{\Omega_p} \alpha  \tilde{\mathcal{K}}\Delta_D w q \,d\vec{x} &=  \alpha \sup\{x_3 \in [-h,h]\}\norm{\Delta_D w}_{0,\Omega_p}\norm{q}_{0,\Omega_p}\\
        &\leq \sqrt{2}\alpha h^{3/2} \norm{w}_{2,\omega_p}\norm{q}_V.
    \end{align*}}
    It follows that 
    \begin{align*}
        a(\phi,\psi) \leq ~ &D(\Delta_D w,\Delta_D u)_{\omega_p} + (w,u)_{0,\omega_p} + ( \alpha  {\mathcal{K}}(p), \Delta_D u)_{\omega_p} - ( \alpha  \tilde{\mathcal{K}}\Delta_D w, q )_{\Omega_p} + (p,q)_{\Omega_p} + k_p(\partial_3p,\partial_3q)_{\Omega_p}\\
        \leq ~ &\max(D,1)\norm{w}_{W}\norm{ u}_{W} +  \alpha h\norm{p}_{\Omega_p}\norm{ u}_{W}  -  \alpha h\norm{ w}_{W}\norm{q}_{\Omega_p} + \max(1,k^*)\norm{p}_V\norm{q}_V\\
        \leq~& C( \alpha ,D,h,k^*)\norm{\phi}_W\norm{\psi}_V, ~~\forall~~\phi,\psi \in W \times V,\end{align*}
    which is to say that the bilinear form $a(\cdot,\cdot)$ is continuous on $W\times V$.
   By the Lax-Milgram Theorem \cite{kesavan}, we have that there is a unique solution to the variational problem \eqref{FDweak} with associated continuity estimate. 
   
   Now, considering that \eqref{FDweak} holds for all $\psi = [u,q] \in W \times V$, we choose $q=0$ to obtain for the solution $w \in W$ that
   \begin{align}
       D\int_{\omega_p} \Delta_D w \Delta_D u\,dx_1dx_2 + \int_{\omega_p} wu\,dx_1dx_2 + \int_{\omega_p} \alpha {\mathcal{K}}p [\Delta u] \,d\vec{x} = \int_{\omega_p}(f_2u + f_1u)\,dx_1dx_2
    \end{align}
for all $u \in W$. We rewrite as
      \begin{align}\label{thisequality}
       (D\Delta_D w+\alpha{\mathcal K}p,\Delta_D u)_{\omega_p}= (f_1+f_2-w,u)_{\omega_p},~~\forall u \in W.
    \end{align}
    But, since $C_0^{\infty}(\omega_p) \subseteq W$, we can choose $u$ therein to obtain from \eqref{thisequality} that 
    $$\Delta_D[\Delta_Dw+\alpha \tilde{\mathcal K}p] \in L^2(\omega_p).$$ Moreover, we may integrate by parts twice with $u \in C_0^{\infty}(\omega_p)$ to obtain
    $$\big(\Delta_D[D\Delta_D w+\alpha {\mathcal K}p],u)_{\omega_p} = (f_1+f_2-w,u)_{\omega_p},$$ and by density of the test functions in $L^2(\omega_p)$ we have the almost everywhere equality $$\Delta_D[D\Delta_D w+\alpha {\mathcal K}p]=f_1+f_2-w.$$ Moreover, if we now consider \eqref{thisequality} with $u$ taken from $W$, and undo integration by parts in the standard way, we will infer 
$$\big(\Delta_D[D\Delta_D w+\alpha {\mathcal K}p],u)_{\omega_p}+(\gamma_0[\Delta_D w+\alpha {\mathcal K}p],\gamma_1[u])_{\Gamma_c} = (f_1+f_2-w,u)_{\omega_p}.$$ But by the earlier $L^2(\omega_p)$ equality, we infer that the trace term vanishes for all $u \in W$. In the standard way, by the surjectivity of the trace mapping \cite{kesavan}, we infer that $\gamma_0[D\Delta_Dw+\alpha \mathcal K p]=0$. 

Then, with the regularity of $w$ established, the pressure equation obtains $p \in \mathcal D(A)$ as follows: With $f_3 -  \alpha \tilde{\mathcal{K}}\Delta_D f_1 +\alpha\tilde{\mathcal K}\mathcal E^{1/2}w \in L^2(\Omega_p)$, we may choose $\psi = [0,p] \in W\times V$ to obtain the variational form of
$$  p -\alpha \tilde{\mathcal K}\mathcal E^{1/2}w + Ap = f_3 -\alpha \tilde{\mathcal K}\mathcal E^{1/2}f_1 \in L^2(\Omega_p).$$
We may then apply elliptic regularity in the $x_3$-direction associated with the elliptic operator $A$.
 Finally, since we may let $v=w-f_1 \in W$, we have obtained a solution $[w,v,p] \in \mathcal D(\mathbf A)$ to the resolvent equation.
    This is now to say that the operator $I - \mathbf A : \mathcal{D}(\mathbf A) \to X$ is surjective. 
    
    By the classical form of the Lumer-Phillips theorem \cite[Section 1.4]{pazy}, $\mathbf A$ generates a strongly continuous semigroup of contractions on $X$.
\end{proof}
We note that strong or mild solutions can be obtained directly, under various (standard) hypotheses for $f$ and $g$. See for instance \cite[Section 4.2]{pazy}.

As described above, the generation result provides {\em strong solutions} for data $\mathbf y=[w,v,p] \in \mathcal D(\mathbf A)$, and {\em mild solutions} for data $\mathbf y \in X$, which are also {\em weak solutions} (in the appropriate sense). 

Moreover, under the hypotheses on $k_p(t)$ in Assumption \ref{KAssumption} we may replace $A$ with $A(t)$ ($k\mapsto k_p(t)$) and the resulting operator $\mathbf A(t)$ will induce a stable family of generators on $\mathcal D(\mathbf A(t)) \subseteq X$ satisfying the hypotheses for generation of {\em an evolution system} \cite[Ch. 5.3]{pazy}. Again, via the standard mechanisms, weak solutions can be obtained for the time-dependent problem.

\scriptsize

\end{document}